\newtheorem{theorem}[equation]{Theorem}
\newtheorem{lemma}[equation]{Lemma}
\newtheorem{proposition}[equation]{Proposition}
\theoremstyle{definition}
\newtheorem{definition}[equation]{Definition}
\theoremstyle{remark}
\numberwithin{equation}{subsection}
\begin{document}
\title{Sárközy's theorem for shifted primes with restricted digits}

\author[Burgin]{Alex Burgin}
    \address{School of Mathematics, Georgia Institute of Technology, Atlanta GA 30332, USA}
    \email{aburgin6@gatech.edu}

 \begin{abstract}
    We study recurrence along shifted primes with restricted digits. By constructing a local approximant to the associated exponential sums, we prove the van der Corput property for shifted primes with restricted digits. This in turn shows that if $A\subset \mathbb{N}$ has positive upper Banach density, then there exists some prime $p$ with restricted digits and two elements $a_1,a_2\in A$ such that \begin{align*}
        a_1+p-1=a_2.
    \end{align*}
\end{abstract}

\subjclass[2020]{Primary 11A63, 11N05; Secondary 11N13, 11L20, 11B30, 37A44.}

\keywords{Restricted-digit primes, exponential sums over primes,
primes in arithmetic progressions, van der Corput sets,
intersective sets, Sárközy's theorem.}

\maketitle 

\vspace{-1cm}

\tableofcontents

\section{Introduction} 

Let $b\geq 2$ and $\mathcal{A}\subset \{0,\hdots,b-1\}$ be a set of allowed digits. The associated restricted digit set \begin{align*}
    \mathcal{C}=\bigcup_{N\in \mathbb{N}}\Big\{\sum_{j=0}^{N-1}a_j b^j:a_j\in \mathcal{A}\Big\}
\end{align*} contains $|\mathcal{A}|^N=X^\delta$ integers below $X=b^N$, where $\delta = \frac{\log |\mathcal{A}|}{\log b}$. Such examples form a natural family of sparse subsets of the integers, and one expects---excluding local congruence obstructions---approximately $X^\delta / \log X$ primes from $\mathcal{C}$ in $\{1,\hdots,X\}$. Even without considering the sparsity of the set, validating this prediction and addressing related questions are difficult, because primality is governed by multiplicative phenomena, whereas $\mathcal{C}$ has very rich additive structure.\\

A substantial literature studies the arithmetic and distribution of digit-restricted integers and primes. The distribution of missing-digit integers in residue classes was studied by Erd\H{o}s, Mauduit, and S\'{a}rk\"{o}zy \cite{EMS1,EMS2}, and subsequently by Konyagin \cite{konyagin}, while Dartyge and Mauduit \cite{semiprimes} proved the existence of semiprimes in such sets. Bourgain \cite{bourgainPrimes1,bourgainPrimes2} and Swaenepoel \cite{swaenepoel} studied primes with many prescribed digits. Burgin, Fragkos, Lacey, Mena, and Reguera established various recurrence properties for missing-digit sets in \cite{burgin2026integercantorsetsarithmetic,burgin2026szemeredistheoremcantorsets}, as did Saavedra-Araya in \cite{saavedraAraya}. Quite recently, Brokering Pinilla, Iosevich, and Krause \cite{brokeringIosevichKrause} obtained pointwise almost-everywhere convergence and variational estimates for ergodic averages along these sets. For primes with globally forbidden digits, Maynard proved in two papers \cite{maynardDecimal,maynardAsymptotic} that decimal primes may omit any fixed digit, and in the large-base regime obtained asymptotic formulas for primes and polynomials with restricted digits. Nath \cite{nathBV} later established Bombieri-Vinogradov-type distribution theorems for primes with one missing digit, and Leng and Sawhney \cite{lengSawhney} proved a restricted digit version of Vinogradov's three-primes theorem.\\

The motivating question of the current paper comes from additive recurrence. S\'{a}rk\"{o}zy \cite{sarkozyOriginal} proved, in a quantitative manner, that the shifted primes $\mathbb{P}-1$ form a set of recurrence. Kamae and Mendès France \cite{kamaeMendesFrance} showed that the shifted primes are a van der Corput set, which is stronger than recurrence; see also Slijepčević \cite{primesQuant} for preliminary quantitative bounds. Green \cite{greenPowerSavings} showed subsequently that one has a power savings in the quantitative van der Corput statement. We ask whether this recurrence phenomenon survives after the primes are thinned by a rigid digit restriction. This is not a formal consequence of the corresponding statement for either $\mathbb{P}-1$ or $\mathcal{C}$, since recurrence and the van der Corput property are not inherited by arbitrary subsets.\\

Our results are complementary to those of Nath and
Leng--Sawhney.  Nath proves substantially stronger
distribution estimates with respect to the size of the modulus: for primes
omitting one digit in a sufficiently large base, he obtains
Bombieri--Vinogradov-type estimates averaged over moduli growing as a power of the main parameter, together with stronger variants for suitably structured
weights.  By contrast, our Dirichlet-type theorem concerns fixed moduli.
However, it applies to a broader class of digit sets, allows moduli having
arbitrary common factors with the base, and determines the resulting
nonuniform local density explicitly.  In particular, the main term separates
the usual prime-to-$b$ congruence obstruction from a finite $b$-adic digit
density.  Leng and Sawhney pursue a different additive question: they prove
that every sufficiently large odd integer is a sum of three primes omitting a
fixed digit.  Their argument therefore establishes the averaged Fourier
control needed for a ternary representation theorem, whereas our second main
result gives cancellation of the individual restricted digit prime exponential sum
at every fixed irrational frequency.  Neither the progression estimates of
Nath nor the ternary theorem of Leng--Sawhney directly yields this pointwise
irrational-frequency statement or the resulting conclusion that the shifted
restricted digit primes form a van der Corput set.  Thus the novelty here lies
not in improving the available level of distribution, but in combining an
explicit fixed-modulus local theory with irrational Fourier decay, for a more
general class of digit sets, in a form adapted to recurrence.

\subsection{Standing conditions on $\mathcal{C}$}\label{subsec:standingAssumptions} Fix $\epsilon>0$. Throughout this paper, $b$ is taken sufficiently large in terms of $\epsilon$. We will consider restricted digit sets $\mathcal{C}$ with digits $\mathcal{A}\subset \{0,...,b-1\}$, where $\mathcal{A}$ satisfies the following conditions: \begin{enumerate}
    \item[(1)] $\{0,1\}\subset \mathcal{A}$
    \item[(2)] $\mathcal{A}$ is a disjoint union of $k$ intervals
    \item[(3)] $|\mathcal{A}|=b-s\geq kb^{4/5+\epsilon}$.
\end{enumerate}  

The first criterion eliminates local obstructions from powers of $b$, while the second and third provide sufficient Fourier control and ensure that $\mathcal{C}$ is not too sparse.\\

\subsection{The main results}

\begin{theorem}\label{thm:Dirichlet}
    Assume $\mathcal{C}$ satisfies the standing conditions above. Then, for each fixed modulus $m\geq 1$ and residue $t\in \mathbb{Z}/m\mathbb{Z}$, and any $K>0$, \begin{align*}
        \sum_{\substack{0\leq n<b^N\\ n\equiv t\ (\text{mod }m)}}\mathbf{1}_\mathcal{C}(n)\Lambda(n)=\kappa_{m,t}(b-s)^N+O_K\Big(\frac{(b-s)^N}{(\log b^N)^K}\Big)
    \end{align*} where \begin{align*}
       \kappa_{m,t}:=\frac{b}{\phi(bv)}\mathbf{1}((t,v)=1)\cdot (b-s)^{-L}\sum_{\substack{n<b^L\\ n\equiv t\ (\text{mod }u)}}\mathbf{1}_\mathcal{C}(n)\mathbf{1}((n,b)=1).
    \end{align*} Here we write $m=uv$, such that $(v,b)=1$ and $p|u\implies p|b$. $L\in \mathbb{N}$ is any sufficiently large number such that $u|b^{L}$ (the normalized sum is independent of $L$ in such a case: see Lemma \ref{lem:wellDefined}).
\end{theorem} By considering $m=1$, this recovers the asymptotic of Maynard \cite{maynardAsymptotic} for the number of primes in $\mathcal{C}$, as well as the error term. It also incorporates local (congruence and digit-based) obstructions to well-distribution in $m\mathbb{Z}+t$: \begin{enumerate}
    \item[(a)] If $(t,m)>1$, then it is an easy exercise to show that $\kappa_{m,t}=0$. 
    \item[(b)] If $b^j|m$ and $t-b^j\lfloor t/b^j\rfloor\not\in \mathcal{C}$, then one can also show that $\kappa_{m,t}=0$.
\end{enumerate}

Our second main result is a Vinogradov-type equidistribution theorem for exponential sums over primes in $\mathcal{C}$ (henceforth denoted as $\mathbb{P}_\mathcal{C}$).

\begin{theorem}\label{thm:Vinogradov}
    Assume $\mathcal{C}$ satisfies the standing conditions. Then, for any $\theta\in \mathbb{R}\setminus \mathbb{Q}$, \begin{align*}
        \sum_{0\leq n<b^N}\mathbf{1}_\mathcal{C}(n)\Lambda(n)e(n\theta)=o((b-s)^N).
    \end{align*}
\end{theorem} This estimate, although qualitative, is sufficient for the transference and additive applications in the next subsection.

\subsection{A S\'ark\"ozy-type result}\label{subsec:Sarkozy} As an application of Theorems \ref{thm:Dirichlet} and \ref{thm:Vinogradov}, we obtain a S\'ark\"ozy-type result regarding shifted primes with restricted digits. Recall that a set $S\subset \mathbb{Z}$ is called \textit{intersective} (or a \textit{set of recurrence}) if every subset $A\subset \mathbb{Z}$ of positive upper Banach density contains two distinct elements whose difference lies in $S$. Equivalently, $S$ is a set of recurrence if for every measure-preserving system $(X,\mathcal{B},\mu,T)$ and every $E\in \mathcal{B}$ with $\mu(E)>0$, there exists $s\in S$ such that $\mu(E\cap T^{-s}E)>0$.

Problems of this type originate in work of S\'{a}rk\"{o}zy, who showed that the set of squares and, later, the set of shifted primes $\mathbb{P}-1$ are intersective \cite{sarkozySquares} \cite{sarkozyOriginal}. Subsequent work of Lucier \cite{lucier}, Ruzsa-Sanders \cite{ruzsaSanders}, Wang \cite{wang}, and most recently, Green \cite{greenPowerSavings}, greatly improved the quantitative bounds in the case of shifted primes, producing a power-savings result.

In the present paper, we can obtain the S\'ark\"ozy-type result for the shifted set $\mathbb{P}_\mathcal{C}-1$, where $\mathcal{C}$ satisfies the standing conditions in \S\ref{subsec:standingAssumptions}. Our main application is the following theorem.

\begin{theorem}\label{thm:mainThm}
    Let $A\subset \mathbb{N}$ be a set with positive upper Banach density. Suppose $\mathcal{C}$ satisfies the standing conditions. Then, there exists some prime $p\in \mathbb{P}_\mathcal{C}$ and two elements $a_1,a_2\in A$ such that \begin{align*}a_1+p-1=a_2.\end{align*}
\end{theorem} 

The conditions that 0 and 1 are admissible digits are necessary. Consider $A=b^2\mathbb{N}$. We have that $A-A=b^2\mathbb{Z}$, and so if there was some $p\in \mathbb{P}_\mathcal{C}$ with $p-1\in A-A$, one would have that $p\equiv 1\ (\text{mod }b^2)$. Consequently, the last two digits of $p$ in base $b$ are 01, and so 0 and 1 must be allowed digits for $\mathbb{P}_\mathcal{C}-1$ to be intersective.

Our methods show that $\mathbb{P}_{\mathcal C} - 1$ is not merely intersective but a van der Corput set, a strictly stronger notion of recurrence. Green's aforementioned result arose from a quantitative argument involving the van der Corput property for ordinary primes. The deduction of these recurrence statements from Theorems \ref{thm:Dirichlet} and \ref{thm:Vinogradov} uses a spectral argument, which we detail in \S\ref{sec:recurrence}.

An alternative route to Theorem \ref{thm:mainThm} would be to show that, for each $r\geq 1$, the shifted restricted-digit primes $\mathbb{P}_\mathcal{C}-1$ contain a finite IP set of dimension $r$, which is to say, that there exist $n_1,\hdots,n_r$ such that \begin{align*}
    \text{FS}(n_1,\hdots,n_r)\subseteq \mathbb{P}_\mathcal{C}-1,
\end{align*} where \begin{align*}
    \text{FS}(n_1,\hdots, n_r):=\Big\{\sum_{i\in I}n_i:I\subseteq [r], I\neq \varnothing\Big\}.
\end{align*} The containment of arbitrarily large IP sets implies recurrence by a standard finite IP-recurrence argument (see, e.g., \cite{furstenbergKatznelson}). For the unrestricted primes, the corresponding assertion follows from the Green–Tao–Ziegler \cite{greenTao,greenTaoZiegler,greenTaoMobius} theory of finite-complexity linear systems in the primes, applied to linear forms of this type. No comparable linear-forms theorem is presently available for primes with restricted digits. Proving one would require higher-order pseudorandomness/Gowers norm estimates for $\mathbb{P}_\mathcal{C}$ beyond the Fourier estimates established here (i.e. $U^k$-estimates with $k\geq 3$).

\subsection{Notation} Let $\mathbb{P}_\mathcal{C}$ denote the set of primes in the restricted-digit set $\mathcal{C}$, so that $\mathbb{P}_\mathcal{C}:=\mathbb{P}\cap \mathcal{C}$. We let $e(x):=e^{2\pi ix}$ denote the standard complex exponential function. For a positive integer $X$, and a function $f:\mathbb{Z}_{\geq 0}\rightarrow \mathbb{C}$, we write \begin{align*}
    \widehat{f}_X(\theta):=\sum_{0\leq n<X}f(n)e(\theta n).
\end{align*} In the case $f=\mathbf{1}_\mathcal{C}$, we simply write \begin{align*}
    \widehat{C}_X(\theta):=\sum_{0\leq n<X}\mathbf{1}_\mathcal{C}(n)e(\theta n)
\end{align*} for convenience. We also write $\|\cdot\|$ to denote the distance to the nearest integer: it is easy to see that $\|x\|$ is comparable to $|e(x)-1|$. Finally, we use the standard asymptotic notation: for $f:\mathbb{R}\rightarrow \mathbb{C}$ and $g:\mathbb{R}\rightarrow \mathbb{R}^+$, $f\ll g$ or $f=O(g)$ means there exists some absolute constant $C>0$ such that $|f(x)|\leq Cg(x)$. Similarly, $f\ll_A g$ or $f=O_A(g)$ mean that there exists some constant $C=C(A)>0$ depending on a parameter $A$ such that $|f(x)|\leq C|g(x)|$. Since we are taking the restricted digit set $\mathcal{C}$ to be fixed throughout this paper, we will view $b$, $s$, and $k$ as absolute constants and drop them from any subscripts.

\section{The van der Corput property for Shifted Primes with Restricted Digits}\label{sec:recurrence}

Here we use Theorems \ref{thm:Dirichlet} and \ref{thm:Vinogradov} to show Theorem \ref{thm:mainThm}. As in \cite{greenPowerSavings}, one must construct a trigonometric polynomial with spectrum supported on the shifted primes; in our setting, one must have spectrum supported on shifted primes with restricted digits. Let us record the precise framework necessary, as given by Kamae and Mendès France in \cite{kamaeMendesFrance}. The introduction of Green's paper \cite{greenPowerSavings} is also a good reference for this section.

\begin{definition}
    A set $H\subset \mathbb{N}$ is a \textit{van der Corput} set if, whenever a sequence $(x_n)\subset \mathbb{T}$ has the property that \begin{align*}
        (x_{n+h}-x_n)_{n\in \mathbb{N}}
    \end{align*} is uniformly distributed modulo 1 for every $h\in H$, the sequence $(x_n)$ itself is uniformly distributed modulo 1.
\end{definition}

It is well-known that the van der Corput property implies intersectivity; it is actually strictly stronger than intersectivity, as shown by Bourgain \cite{bourgainVDC}. We refer to Green's paper, \S1.1 and \S1.2, for a standard proof of the implication.\\

Let $P(H)$ be the set of trigonometric polynomials of the form $f(x)=\sum_{h\in H}a_he(hx)$ with $(a_h)$ having finite support, such that $f(0)=1$ and $|f|\leq 1$. Let $P^*(H)$ be the closure of $P(H)$ under pointwise convergence. For $\eta\geq 0$, let \begin{align*}
    P_\eta^*(H):=\{f\in P^*(H):\text{Re }f(x)\geq -\eta\text{ for every }x\in \mathbb{T}\}.
\end{align*} Kamae and Mendès France showed the following: \begin{lemma}[Kamae–Mendès France, \cite{kamaeMendesFrance}]\label{lem:KMFcriterion}
   If $P_\eta^*(H)\neq \varnothing $ for every $\eta>0$, then $H$ is a van der Corput set.
\end{lemma}

We discuss the construction of trigonometric polynomials for the setting $H=\mathbb{P}_\mathcal{C}-1$. Begin by fixing $q\in \mathbb{N}$, and define \begin{align*}
    f_{q,N}(\alpha)&:=\frac{1}{F_{q,N}}\sum_{\substack{p<b^N\\ p\in \mathbb{P}_\mathcal{C}\\ p\equiv 1\ (\text{mod }q!)}}(\log p)e((p-1)\alpha), \\ F_{q,N}&:= \sum_{\substack{p<b^N\\ p\in \mathbb{P}_\mathcal{C}\\ p\equiv 1\ (\text{mod }q!)}}\log p.
\end{align*}

\begin{lemma}\label{lem:weightedVM}
    One has that \begin{align*}
        F_{q,N}\sim \kappa_{q!,1}(b-s)^N,
    \end{align*} where $\kappa_{q!,1}>0$ is the constant in Theorem \ref{thm:Dirichlet}.
\end{lemma}

\begin{proof}
    It suffices to show that the proper prime power contribution of the van Mangoldt function is negligible compared to the main term here. Since \begin{align*}
        \sum_{k\geq 2}\sum_{\substack{p^k<b^N\\ p\in \mathbb{P}}}\log p\ll b^{N/2}\log^{O(1)}(b^N),
    \end{align*} this follows from the density hypothesis in the standing conditions.
\end{proof}

\begin{proposition}\label{prop:constructionMeasures}
    For every fixed $q\in \mathbb{N}$ and every $\alpha\in \mathbb{T}$, the limit \begin{align*}
    f_q(\alpha):=\lim_{N\rightarrow \infty} f_{q,N}(\alpha)
    \end{align*} exists. It has the following properties: \begin{enumerate}
        \item[(i)] $f_q(\alpha)=0$ whenever $\alpha$ is irrational
        \item[(ii)] if $\alpha=a/r$ is rational and $r|q!$, then $f_q(\alpha)=1$.
    \end{enumerate}
\end{proposition}

\begin{proof}
    We first note that, by the same logic as in Lemma \ref{lem:weightedVM}, one has that \begin{align}\label{eq:VMmeasure}
        f_{q,N}(\alpha)=\frac{1}{F_{q,N}}\sum_{\substack{n<b^N\\ n\equiv 1\ (\text{mod }q!)}}\mathbf{1}_\mathcal{C}(n)\Lambda(n)e((n-1)\alpha)+o(1).
    \end{align} Since \begin{align*}
        \mathbf{1}_{n\equiv 1\ (\text{mod }q!)}=\frac{1}{q!}\sum_{t\ (\text{mod }q!)}e\Big(\frac{t(n-1)}{q!}\Big)
    \end{align*} one may write \begin{align*}
        f_{q,N}(\alpha)=\frac{e(-\alpha)}{q!F_{q,N}}\sum_{t\ (\text{mod }q!)}e(-t/q!)\sum_{n<b^N}\mathbf{1}_\mathcal{C}(n)\Lambda(n)e\Big(n\Big(\alpha + \frac{t}{q!}\Big)\Big)+o(1).
    \end{align*} For $\alpha$ irrational, applying Lemma \ref{lem:weightedVM} and noting each $\alpha+t/q!$ is irrational, one may apply Theorem \ref{thm:Vinogradov} to obtain that $\lim_{N\rightarrow \infty}f_{q,N}(\alpha)=0$.

   Now consider $\alpha = a/r$ rational. Let $M=\text{lcm}(r,q!)$, then \begin{align*}
       f_{q,N}(\alpha)&=\frac{1}{F_{q,N}}\sum_{\substack{n<b^N\\ n\equiv 1\ (\text{mod }q!)}}\mathbf{1}_\mathcal{C}(n)\Lambda(n)e((n-1)a/r)+o(1) \\ &=\frac{1}{F_{q,N}}\sum_{c\ (\text{mod }M)}\sum_{\substack{n<b^N\\ n\equiv 1\ (\text{mod }q!)\\ n\equiv c\ (\text{mod }M)}}\mathbf{1}_\mathcal{C}(n)\Lambda(n)e((n-1)a/r)+o(1) \\ &= \sum_{\substack{c\ (\text{mod }M)\\ c\equiv 1\ (\text{mod }q!)}}e(-a/r)\cdot \frac{1}{F_{q,N}}\sum_{\substack{n<b^N\\ n\equiv c\ (\text{mod }M)}}\mathbf{1}_\mathcal{C}(n)\Lambda(n)e(na/r)+o(1) \\ &= \sum_{\substack{c\ (\text{mod }M)\\ c\equiv 1\ (\text{mod }q!)}}e((c-1)a/r)\cdot \frac{1}{F_{q,N}}\sum_{\substack{n<b^N\\ n\equiv c\ (\text{mod }M)}}\mathbf{1}_\mathcal{C}(n)\Lambda(n)+o(1).
   \end{align*} Applying Theorem \ref{thm:Dirichlet} and Lemma \ref{lem:weightedVM}, one obtains that \begin{align*}
       \lim_{N\rightarrow \infty}f_{q,N}(\alpha)=\sum_{\substack{c\ (\text{mod }M)\\ c\equiv 1\ (\text{mod }q!)}}e((c-1)a/r)\cdot \frac{\kappa_{M,c}}{\kappa_{q!,1}}.
   \end{align*} This proves the existence of the rational limit in general, and it is clear when $r|q!$, since $M=q!$ in such a case, that this limit is one.
\end{proof}

Each function $f_q$ lies in $P^*(H)$. Since the pointwise limit $\lim_{q\rightarrow \infty}f_q$ is $\mathbf{1}_{\mathbb{Q}/ \mathbb{Z}}$ by Proposition \ref{prop:constructionMeasures}, one then obtains that $\mathbf{1}_{\mathbb{Q}/\mathbb{Z}}\in P^*(H)$. Since $\mathbf{1}_{\mathbb{Q}/\mathbb{Z}}\geq 0$, we obtain in particular that $\mathbf{1}_{\mathbb{Q}/\mathbb{Z}}\in P_\eta^*(H)$ for each $\eta> 0$, and so by Lemma \ref{lem:KMFcriterion} one obtains that $H=\mathbb{P}_\mathcal{C}-1$ is a van der Corput set.

\section{Fourier Estimates}\label{sec:Fourier}

In the next two subsections, the bounds are similar to those of \cite{maynardAsymptotic}: we include for completeness and exposition. We assume the standing conditions in \S\ref{subsec:standingAssumptions} on $\mathcal{C}$ in each of these lemmas.

\subsection{$L^p$, Large Sieve, and Hybrid Estimates}

We begin with a couple estimates for $\widehat{\Lambda}_{x}(t)$, which are classical.

\begin{lemma}\label{lem:vonMangoldtFourierBounds}
    Let $\theta = a/d+\beta$ with $(a,d)=1$ and $|\beta|<1/d^2$. Then, if $\widehat{\Lambda}_X(\theta):=\sum_{0\leq n<X}\Lambda(n)e(n\theta)$, then both of the following bounds hold: \begin{enumerate}
        \item[(1)] The major arc bound (where $\beta d$ is small). One has that \begin{align*}
            |\widehat{\Lambda}_X(\theta)|\ll \Big(X^{4/5}+(dX)^{1/2}+\frac{X}{d^{1/2}}\Big)(\log X)^4
        \end{align*} See, e.g., \cite[Thm 13.6]{iwaniecKowalski}
        \item[(2)] The minor arc bound (where $\beta d$ is large). One has that \begin{align*}
            |\widehat{\Lambda}_X(\theta)|\ll \Big(X^{4/5}+\frac{X^{1/2}}{|d\beta|^{1/2}}+X|d\beta|^{1/2}\Big)(\log X)^{4}.
        \end{align*} See, e.g., \cite[Lemma 4.2]{maynardAsymptotic}
    \end{enumerate}
\end{lemma}

We also have various results regarding $\widehat{C}_{b^N}$. Recall that $\mathcal{C}$ consists of nonnegative integers that have only digits in $\mathcal{A}$ when written in base $b$, and the one standing condition is that the set of digits $\mathcal{A}$ is a union of $k$ intervals.

\begin{lemma}[$L^1$ Bound, cf. \cite{maynardAsymptotic} Lemma 5.1]\label{lem:L1estimate} If $C_0:=k+\frac{2(b-s)}{b\log b}$, then \begin{align}
    \sup_{x\in \mathbb{R}}\sum_{0\leq a< b^N}\Big|\widehat{C}_{b^{N}}\Big(x+\frac{a}{b^N}\Big)\Big|\leq (C_0b\log b)^N.
\end{align}
\end{lemma}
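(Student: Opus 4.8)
The plan is to exploit the Riesz-product (i.e.\ tensor) structure of $\widehat{C}_{b^N}$. Because $\mathcal{C}$'s $N$-digit elements are exactly sums $\sum_{i=0}^{N-1} n_i b^i$ with each $n_i \in \mathcal{A}$, the Fourier transform factors as
\begin{align*}
  \widehat{C}_{b^N}(\theta) = \prod_{i=0}^{N-1} \Phi(b^i \theta), \qquad \Phi(\psi) := \sum_{a \in \mathcal{A}} e(a\psi),
\end{align*}
where $\Phi$ is the exponential sum over the allowed digits. First I would record this factorization and the trivial bounds $|\Phi(\psi)| \le b - s = |\mathcal{A}|$ and $\Phi(0) = b-s$. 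The summation variable $a \le b^N$ in the statement then corresponds, via base-$b$ expansion $a = \sum a_i b^i$, to shifting $x$ by a digit in each coordinate, so the sum $\sum_{a \le b^N} |\widehat{C}_{b^N}(x + a/b^N)|$ telescopes coordinate-by-coordinate into a product of one-dimensional sums. Concretely, writing $x + a/b^N$ and peeling off digits, one gets
\begin{align*}
  \sum_{0 \le a < b^N} \Big|\widehat{C}_{b^N}\Big(x + \tfrac{a}{b^N}\Big)\Big|
  = \prod_{i=0}^{N-1} \sum_{c=0}^{b-1} \Big|\Phi\Big(y_i + \tfrac{c}{b}\Big)\Big|
\end{align*}
for suitable real $y_i$ depending on $x$ (this is the key combinatorial identity — it needs a careful but routine induction on $N$, keeping track of which block of digits of $a$ feeds into which argument $b^i\theta$).

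The heart of the matter is then the single-scale estimate: I claim
\begin{align*}
  \sup_{y \in \mathbb{R}} \sum_{c=0}^{b-1} \Big|\Phi\Big(y + \tfrac{c}{b}\Big)\Big| \le C_0 \, b \log b
\end{align*}
with $C_0 = k+1 + \tfrac{2(b-s)}{b\log b}$; taking the product over the $N$ coordinates gives $(C_0 b \log b)^N$ and finishes the proof. To prove the single-scale bound, split the $b$ points $y + c/b$ into those that are "close" to an integer and those that are "far." For the far points, one uses the elementary bound $|\Phi(\psi)| \le \min(b-s,\ \text{something like } \|\psi\|^{-1})$ — more precisely, since $\mathcal{A}$ is a union of $k+1$ (or fewer) blocks of consecutive integers (the complement of $k$ intervals inside $\{0,\dots,b-1\}$ is at most $k+1$ intervals), $\Phi$ is a sum of at most $k+1$ geometric progressions, each of which has modulus $\ll \min(\text{length}, \|\psi\|^{-1})$. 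Summing $\sum_{c} \|y + c/b\|^{-1}$ over the $\le b$ equally-spaced points away from the nearest one contributes the $b \log b$ (harmonic sum $\asymp b \log b$), multiplied by the number of blocks $k+1$; the one or two points nearest an integer are handled by the trivial bound $b - s$ each, producing the $\tfrac{2(b-s)}{b\log b} \cdot b\log b$ correction. Assembling these gives exactly $C_0 b \log b$.

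The main obstacle I anticipate is not any single estimate but getting the combinatorial bookkeeping of the digit-shift identity exactly right — in particular verifying that as $a$ ranges over $[0,b^N)$ the arguments $b^i(x + a/b^N)$ decouple into independent shifts by $c/b$ across the $N$ scales, with the $\sup$ over $x$ absorbing all the cross-terms uniformly. A secondary technical point is pinning down the sharp constant: one must check that the complement of $k$ disjoint sub-intervals of $\{0,\dots,b-1\}$ really is a union of at most $k+1$ intervals (true, with equality when none of the $I_i$ touches the endpoints $0$ or $b-1$), so that the "number of geometric pieces" factor is $k+1$ and not larger, and that the w鲁rap-around from working modulo $1$ does not create an extra piece. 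Once the identity and the single-scale bound are in hand, the conclusion is immediate by multiplicativity.
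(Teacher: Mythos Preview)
Your approach is essentially the paper's: factor $\widehat{C}_{b^N}(\theta)=\prod_i\Phi(b^i\theta)$, bound $|\Phi(\psi)|\le\min\{b-s,(k+1)/(2\|\psi\|)\}$ via the $(k+1)$-interval structure of $\mathcal{A}$, and reduce to the single-scale harmonic sum $\sum_{t=0}^{b-1}\min\{b-s,(k+1)b/(2t)\}\le 2(b-s)+(k+1)b\log b=C_0b\log b$. One correction: your displayed identity cannot hold as an \emph{equality}, since the $i$th factor $\Phi(b^i(x+a/b^N))$ depends on the digits $a_0,\dots,a_{N-i-1}$ and not on a single digit---the paper sidesteps this by first applying the pointwise bound (which depends only on the leading base-$b$ digit of $b^i\theta$), after which the sum over all digit tuples factors exactly; your suggested iterated-$\sup$ fix achieves the same inequality.
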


\begin{proof}
    We may write \begin{align*}
        \widehat{C}_{b^N}(x)=\prod_{i=0}^{N-1}\Big(\sum_{a\in \mathcal{A}}e(b^iax)\Big).
    \end{align*} Since $\mathcal{A}=\bigsqcup_{j=1}^kI_j$ for intervals $I_j$, we have by the triangle inequality that \begin{align*}
        \Big|\sum_{a\in \mathcal{A}}e(b^iax)\Big|\leq \sum_{j=1}^k\Big|\sum_{a\in I_j}e(b^iax)\Big|\leq \sum_{j=1}^k \frac{2}{|1-e(b^ix)|}\leq \frac{k}{2\|b^ix\|},
    \end{align*} where we summed the partial geometric series over each interval. Since $|\mathcal{A}|=b-s$, we then have the bound \begin{align}\label{eq:directL-inftyBound}
        |\widehat{C}_{b^N}(x)|\leq \prod_{i=0}^{N-1}\min\{b-s,\frac{k}{2\|b^ix\|}\}.
    \end{align} For $x\in [0,1)$, we may write $x=\sum_{i=1}^Nx_ib^{-i}+\epsilon$, where $x_1,...,x_N\in \{0,...,b-1\}$ and $\epsilon\in [0,b^{-N})$. Thus $\|b^ix\|^{-1}=\|x_{i+1}/b+\epsilon_i\|^{-1}$ for $\epsilon_i\in [0,b^{-1})$. We may then bound \begin{align*}
        \|b^ix\|^{-1}=\|x_{i+1}/b+\epsilon_i\|^{-1}\leq \max\Big\{\frac{b}{x_{i+1}},\frac{b}{b-1-x_{i+1}}\Big\},
    \end{align*} to provide that \begin{align*}
        |\widehat{C}_{b^N}(x)|\leq \prod_{i=0}^{N-1}\min\Big\{b-s,\frac{kb}{2}\max\Big\{\frac{1}{x_{i+1}},\frac{1}{b-1-x_{i+1}}\Big\}\Big\}.
    \end{align*} Let $S_x:=(x+a/b^N\ (\text{mod }1))_{0\leq a< b^N}$. For any $t\neq t'\in S_x$, by writing $t=\sum_{i=1}^Nt_ib^{-i}+\epsilon$ and $t'=\sum_{i=1}^N t_i'b^{-i}+\epsilon'$ as above, we claim that $(t_1,...,t_N)\neq (t_1',...,t_N')$. Indeed, if we had equality, we would have $\|t-t'\|<b^{-N}$, a contradiction as $t$ and $t'$ are separated by at least $b^{-N}$ (mod 1). So, \begin{align*}
        \sum_{0\leq a<b^N}\Big|\widehat{C}_{b^N}\Big(x+\frac{a}{b^N}\Big)\Big|&=\sum_{t\in S_x}|\widehat{C}_{b^N}(t)| \\ &\leq \sum_{0\leq t_1,...,t_N<b}\prod_{i=0}^{N-1}\min\Big\{b-s,\frac{kb}{2}\max\Big\{\frac{1}{t_{i+1}},\frac{1}{b-1-t_{i+1}}\Big\}\Big\} \\ &= \prod_{i=0}^{N-1}\sum_{0\leq t<b}\min\Big\{b-s,\frac{kb}{2}\max\Big\{\frac{1}{t},\frac{1}{b-1-t}\Big\}\Big\}.
    \end{align*} Since $1/t>\frac{1}{b-1-t}$ precisely when $t<\frac{b-1}{2}$ we may compute \begin{align*}
        &\sum_{0\leq t<b}\min\Big\{b-s,\frac{kb}{2}\max\Big\{\frac{1}{t},\frac{1}{b-1-t}\Big\}\Big\}\\ &=\sum_{0\leq t<\frac{b-1}{2}}\min\{b-s,\frac{kb}{2t}\}+\sum_{\frac{b-1}{2}\leq t<b}\min\{b-s,\frac{kb}{2(b-1-t)}\} \\ &\leq  2(b-s)+2\sum_{1\leq t<\frac{b-1}{2}}\frac{kb}{2t}\\ &=2(b-s)+kb\sum_{1\leq t<\frac{b-1}{2}}1/t\\ &\leq 2(b-s)+kb\log b.
    \end{align*} Consequently, \begin{align}
        \sum_{0\leq a< b^N}\Big|\widehat{C}_{b^N}\Big(x+\frac{a}{b^N}\Big)\Big|\leq \Big(2(b-s)+kb\log b\Big)^N.
    \end{align} Since $x$ was arbitrary, this provides the desired result.
\end{proof}

\begin{lemma}[Large Sieve Estimate, cf. \cite{maynardAsymptotic} Lemma 5.2]\label{lem:largeSieveEstimate} Let $C_0$ be as in Lemma \ref{lem:L1estimate}. Then, \begin{align*}
    \sup_{x\in \mathbb{R}}\sum_{d\sim D}\sum_{\substack{0\leq \ell<d\\ (\ell,d)=1}}\sup_{|\epsilon|<\frac{1}{10D^2}}\Big|\widehat{C}_{b^N}\Big(\frac{\ell}{d}+x+\epsilon\Big)\Big|\ll (D^2+b^N)(C_0\log b)^N.
\end{align*}
    
\end{lemma}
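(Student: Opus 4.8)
The plan is to deduce this from the $L^1$-type bound of Lemma \ref{lem:L1estimate} by a standard ``separated points'' (large sieve) argument. Fix $x\in\mathbb{R}$; all estimates below will be uniform in $x$, so it suffices to bound the inner double sum. Since $\widehat{C}_{b^N}$ is a continuous $1$-periodic function, for each admissible pair $(\ell,d)$ (with $d\sim D$, so $d\le 2D$, and $(\ell,d)=1$) we may pass from the open constraint $|\epsilon|<\tfrac1{10D^2}$ to its closure and choose $\epsilon_{\ell,d}$ with $|\epsilon_{\ell,d}|\le\tfrac1{10D^2}$ attaining the supremum; set $y_{\ell,d}:=\tfrac\ell d+x+\epsilon_{\ell,d}$, viewed in $\mathbb{R}/\mathbb{Z}$. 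The first key point is that these points are well-separated. If $(\ell,d)\ne(\ell',d')$ are reduced fractions with denominators $\le 2D$, then $\tfrac\ell d-\tfrac{\ell'}{d'}=\tfrac{n}{dd'}$ with $n\in\mathbb{Z}\setminus\{0\}$ (nonzero because distinct reduced fractions are distinct rationals, hence distinct points of $[0,1)$), and since $dd'\nmid n$ we get $\|\tfrac\ell d-\tfrac{\ell'}{d'}\|\ge\tfrac1{dd'}\ge\tfrac1{4D^2}$; therefore $\|y_{\ell,d}-y_{\ell',d'}\|\ge\tfrac1{4D^2}-\tfrac2{10D^2}=\tfrac1{20D^2}$. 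So $\{y_{\ell,d}\}$ is a $\tfrac1{20D^2}$-separated subset of $\mathbb{R}/\mathbb{Z}$ of size $\sum_{d\sim D}\phi(d)\ll D^2$.

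Next I would recall the pointwise ingredients established in the course of proving Lemma \ref{lem:L1estimate}. Partition $\mathbb{R}/\mathbb{Z}$ into the $b^N$ base-$b$ intervals $I_{\mathbf{x}}=\big[\sum_{i=1}^N x_ib^{-i},\,\sum_{i=1}^N x_ib^{-i}+b^{-N}\big)$ indexed by digit strings $\mathbf{x}=(x_1,\dots,x_N)\in\{0,\dots,b-1\}^N$. Then one has the \emph{uniform} bound
\[
\sup_{y\in I_{\mathbf{x}}}\big|\widehat{C}_{b^N}(y)\big|\;\le\;M(\mathbf{x})\;:=\;\prod_{i=0}^{N-1}\min\Big\{b-s,\ \tfrac{(k+1)b}{2}\max\Big\{\tfrac1{x_{i+1}},\tfrac1{b-1-x_{i+1}}\Big\}\Big\},
\]
together with $\sum_{\mathbf{x}}M(\mathbf{x})\le\big(2(b-s)+(k+1)b\log b\big)^N=(C_0 b\log b)^N$; both of these are exactly the displays appearing in the proof of Lemma \ref{lem:L1estimate}. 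The second key point is a trivial geometric count: since the $y_{\ell,d}$ are $\tfrac1{20D^2}$-separated and each $I_{\mathbf{x}}$ has length $b^{-N}$, at most $1+20D^2b^{-N}$ of the points $y_{\ell,d}$ lie in any single $I_{\mathbf{x}}$.

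Combining these, every $\big|\widehat{C}_{b^N}(y_{\ell,d})\big|$ with $y_{\ell,d}\in I_{\mathbf{x}}$ is at most $M(\mathbf{x})$, so
\[
\sum_{d\sim D}\ \sum_{\substack{0\le\ell<d\\(\ell,d)=1}}\big|\widehat{C}_{b^N}(y_{\ell,d})\big|\ \le\ \sum_{\mathbf{x}}M(\mathbf{x})\,\#\{(\ell,d):y_{\ell,d}\in I_{\mathbf{x}}\}\ \le\ (1+20D^2b^{-N})\sum_{\mathbf{x}}M(\mathbf{x})\ \le\ (1+20D^2b^{-N})(C_0 b\log b)^N.
\]
Since $(C_0 b\log b)^N=b^N(C_0\log b)^N$, the right-hand side equals $(b^N+20D^2)(C_0\log b)^N\ll(D^2+b^N)(C_0\log b)^N$; as $x$ was arbitrary, this is the claim. (No case split on whether $D^2\lessgtr b^N$ is needed, which is a pleasant feature of running the argument this way.)

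I do not expect a genuinely hard estimate here; the work is bookkeeping. The two points to be careful about are: (i) justifying the uniform-over-$I_{\mathbf{x}}$ bound for $\widehat{C}_{b^N}$, in particular the boundary digits $x_{i+1}\in\{0,b-1\}$, where the relevant factor in $M(\mathbf{x})$ collapses to the crude value $b-s$ (this is already handled inside Lemma \ref{lem:L1estimate}); and (ii) tracking the constants so that the extra factor $b$ between the $L^1$ bound $(C_0b\log b)^N$ and the target $(D^2+b^N)(C_0\log b)^N$ is absorbed precisely by the $b^{-N}$ coming from the interval-count $1+20D^2b^{-N}$, and that the $\epsilon$-perturbation (of size $<\tfrac1{10D^2}$) is genuinely small compared with the Farey spacing $\tfrac1{4D^2}$ so that the points $y_{\ell,d}$ remain separated.
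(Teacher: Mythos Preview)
Your argument is correct and takes a genuinely different route from the paper's. The paper proves the lemma via a Gallagher--Sobolev inequality: it writes $|\widehat{C}_{b^N}(y)|\ll \delta^{-1}\int_{y-\delta}^{y+\delta}|\widehat{C}_{b^N}|+\int_{y-\delta}^{y+\delta}|\widehat{C}'_{b^N}|$, chooses $\delta\asymp D^{-2}$ so that the intervals around the well-separated Farey points are disjoint, and then bounds $\int_0^1|\widehat{C}_{b^N}|\le(C_0\log b)^N$ and $\int_0^1|\widehat{C}'_{b^N}|\le(C_0b\log b)^N$ (the latter via the product rule and another appeal to the $L^1$ machinery). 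You instead exploit directly that the pointwise majorant $M(\mathbf{x})$ from the proof of Lemma~\ref{lem:L1estimate} is \emph{uniform} over the whole $b$-adic cell $I_{\mathbf{x}}$, so summing $M(\mathbf{x})$ and counting how many separated points fall in each cell already gives $(b^N+O(D^2))(C_0\log b)^N$. This is cleaner here---it avoids differentiating $\widehat{C}_{b^N}$ altogether---but it is more specialized: it depends on the explicit cell-wise majorant that the digit structure provides, whereas the paper's approach would apply to any exponential sum with controlled $L^1$ norm of itself and of its derivative. One cosmetic point: your justification ``since $dd'\nmid n$'' for the Farey spacing is not quite the reason (what you actually use is $n\neq 0$ and the fact that both fractions lie in $[0,1)$ so the real difference already has absolute value $<1$), but the inequality $\|\ell/d-\ell'/d'\|\ge 1/(dd')$ is of course correct.
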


\begin{proof}
   By the fundamental theorem of calculus, for any $u\in \mathbb{R}$ we have $\widehat{C}_{b^N}(x)=\widehat{C}_{b^N}(u)+\int_u^x\widehat{C}_{b^N}'(v)dv$. Averaging this over $u\in [x-\delta,x+\delta]$ and applying the triangle inequality, we deduce that \begin{align}
       |\widehat{C}_{b^N}(x)|\ll \frac{1}{\delta}\int_{x-\delta}^{x+\delta}|\widehat{C}_{b^N}(v)|dv+\int_{x-\delta}^{x+\delta}|\widehat{C}_{b^N}'(v)|dv.
   \end{align} After selecting one $\epsilon_{d,\ell}$ for each supremum, the perturbed points $\ell/d+x+\epsilon_{d,\ell}$ are separated from one another by $\gg 1/|D|^2$. Choosing $\delta\approx 1/|D|^2$ then provides (by disjointness of these small intervals of integration) that \begin{align}\label{eq:sobolevIneqType}
       \sum_{d\sim D}\sum_{\substack{0\leq \ell<d\\ (\ell,d)=1}}\sup_{|\epsilon|<\frac{1}{10D^2}}\Big|\widehat{C}_{b^N}\Big(\frac{\ell}{d}+x+\epsilon\Big)\Big|\ll D^2\int_0^1|\widehat{C}_{b^N}(v)|dv+\int_0^1|\widehat{C}_{b^N}'(v)|dv.
   \end{align}

Using the product rule for derivatives, we may write \begin{align*}
    \widehat{C}_{b^N}'(v)&=\Big(\prod_{j=0}^{N-1}\sum_{0\leq d<b}\mathbf{1}_\mathcal{C}(d)e(db^jv)\Big)'\\ &=2\pi i\sum_{j=0}^{N-1}b^j\Big(\sum_{0\leq d<b}d\mathbf{1}_\mathcal{C}(d)e(db^jv)\Big)\prod_{\substack{0\leq i<N\\ i\neq j}}\sum_{0\leq d<b}\mathbf{1}_\mathcal{C}(d)e(db^iv),
\end{align*} and so \begin{align*}
    |\widehat{C}_{b^N}'(v)|\ll \sum_{j=0}^{N-1}b^{j+1}\prod_{\substack{0\leq i<N\\ i\neq j}}\min\{b-s,\frac{k}{2\|b^iv\|}\}\ll b^N\prod_{0\leq i<N}\min\{b-s,\frac{k}{2\|b^iv\|}\}.
\end{align*} If we write $v=\sum_{i=1}^Nv_ib^{-i}+\epsilon$ with $v_i\in \{0,...,b-1\}$ and $\epsilon\in [0,b^{-N})$ we see by another averaging argument that \begin{align*}
    \int_0^1\prod_{0\leq i<N}\min\{b-s,\frac{k}{2\|b^iv\|}\}dv&=b^{-N}\int_0^1\sum_{j=0}^{b^N-1}\prod_{0\leq i<N}\min\{b-s,\frac{k}{2\|b^i(v+j/b^N)\|}\}dv \\ &\leq b^{-N}\int_0^1 (C_0b\log b)^Ndv \\ &= (C_0\log b)^N,
\end{align*} where for each $0\leq v\leq 1$ we use the bounds arising from digit considerations within the proof of Lemma \ref{lem:L1estimate}. From this we deduce that \begin{align*}
    \int_0^1|\widehat{C}_{b^N}'(v)|dv&\leq (C_0b\log b)^N \\
    \int_0^1|\widehat{C}_{b^N}(v)|dv&\leq (C_0\log b)^N
\end{align*} and so from (\ref{eq:sobolevIneqType}) we have that  \begin{align*}
    \sum_{d\sim D}\sum_{\substack{0\leq \ell <d\\ (\ell,d)=1}}\sup_{|\epsilon|<\frac{1}{10D^2}}\Big|\widehat{C}_{b^N}\Big(\frac{\ell}{d}+x+\epsilon\Big)\Big|\ll (D^2+b^N)(C_0\log b)^N.
\end{align*}
\end{proof}

\begin{lemma}[Hybrid Estimate, cf. \cite{maynardAsymptotic} Lemma 5.3]\label{lem:hybridEstimate} Let $B,D\gg 1$, with $B<\frac{b^N}{10D^2}$, and $C_0$ be as in Lemma \ref{lem:L1estimate}. Set $\alpha:=\log(C_0\frac{b}{b-s}\log b)/\log b$. Then, \begin{align*}
    \sup_{x\in \mathbb{R}}\sum_{d\sim D}\sum_{\substack{\ell<d\\ (\ell,d)=1}}\sum_{\substack{|\eta|<B\\ b^N\ell/d+\eta\in \mathbb{Z}}}\Big|\widehat{C}_{b^N}\Big(x+\frac{\ell}{d}+\frac{\eta}{b^N}\Big)\Big|\ll_b (b-s)^N(D^2B)^{\alpha}.
\end{align*} 
    
\end{lemma}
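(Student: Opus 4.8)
The plan is to interpolate between the pure $L^1$ bound of Lemma \ref{lem:L1estimate} and the large sieve bound of Lemma \ref{lem:largeSieveEstimate}, exploiting the fact that the sum here ranges only over $|\eta| < B$ with $B$ relatively small. First I would note that for fixed $d \sim D$ and $\ell$ coprime to $d$, the points $x + \ell/d + \eta/b^N$ with $b^N \ell/d + \eta \in \mathbb{Z}$ are a subprogression of the full family $\{x + a/b^N : a \leq b^N\}$, so the inner sum over $\eta$ is a piece of the $L^1$-type sum controlled by Lemma \ref{lem:L1estimate}. The idea is to organize the $N$ digit-places into two groups: on a well-chosen set of coordinates we use the ``trivial'' bound $b-s$ per factor (contributing $(b-s)$ to the power of that many coordinates), and on the complementary coordinates we use the summation bound $C_0 b\log b$ per coordinate as in Lemma \ref{lem:L1estimate}. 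Choosing the split so that the number of ``summed'' coordinates matches the available range $D^2 B$ — roughly, we can afford to let $\log_b(D^2 B)$ coordinates range freely — yields a bound of shape $(b-s)^{N} \cdot \big(\tfrac{C_0 b \log b}{b-s}\big)^{\log_b(D^2B)} = (b-s)^N (D^2 B)^{\alpha}$, which is exactly the claimed estimate with $\alpha = \frac{\log(C_0 \frac{b}{b-s}\log b)}{\log b}$.

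More concretely, I would proceed as in Maynard's corresponding lemma. Using the product formula $\widehat{C}_{b^N}(y) = \prod_{i=0}^{N-1}\big(\sum_{c \in \mathcal{A}} e(b^i c y)\big)$ and the pointwise bound \eqref{eq:directL-inftyBound}, write $|\widehat{C}_{b^N}(y)| \leq \prod_{i=0}^{N-1} \min\{b-s, \frac{k+1}{2\|b^i y\|}\}$. Split the product at some index $N_0$ to be chosen: for $i < N_0$ bound each factor trivially by $b-s$, and for $i \geq N_0$ keep the digit-dependent bound. The top $N - N_0$ digit-places of $y = x + \ell/d + \eta/b^N$ are governed by the low-order part $\eta/b^N$ together with $x$ and $\ell/d$; since $|\eta| < B$ and $\ell/d$ has denominator $\sim D$, these top places are constrained to lie in a set of size $\ll D B \cdot (\text{something})$ as $\ell, \eta$ vary, so summing the digit-dependent product over them reproduces (a constant power of) the per-coordinate sum $2(b-s) + (k+1)b\log b = C_0 b \log b$ from Lemma \ref{lem:L1estimate}, raised to the power $N - N_0$. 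Taking $N_0$ so that $b^{N-N_0} \approx D^2 B$ balances the two contributions and produces the stated exponent $\alpha$; the hypothesis $\alpha \leq 1$ and $B < b^N/(10D^2)$ guarantee the split index is in the valid range $0 \leq N_0 \leq N$ and that the factor $(b-s)^{N_0}$ dominates correctly.

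The main obstacle I anticipate is the bookkeeping in the separation-of-points step: one must verify carefully that as $(d, \ell, \eta)$ range over the prescribed sets, the resulting points $x + \ell/d + \eta/b^N$ (reduced mod $1$) have distinct top $N - N_0$ base-$b$ digit strings, or at least that each string is hit with bounded multiplicity, so that the sum over these points is genuinely bounded by the single-coordinate geometric-type sum to the power $N - N_0$ rather than overcounted. This requires a spacing argument: points coming from distinct $(\ell, d)$ are $\gg 1/D^2$ apart, points from distinct $\eta$ (same $\ell/d$) are $\gg 1/b^N$ apart, and the combined family has spacing $\gg 1/(D^2 B) \cdot (\dots)$ — precisely the scale $b^{-(N-N_0)}$ that distinguishes the top $N-N_0$ digits, modulo the usual care with carries and the choice of $\delta$. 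Once this disjointness is pinned down, the rest is the routine product-splitting computation already carried out in the proofs of Lemmas \ref{lem:L1estimate} and \ref{lem:largeSieveEstimate}.
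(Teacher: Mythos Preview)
Your high-level philosophy (interpolate between the $L^1$ bound and the large sieve) is exactly right, and the numerology $(b-s)^N\cdot\big(\tfrac{C_0 b\log b}{b-s}\big)^{\log_b(D^2B)}$ is the correct target. However, the concrete mechanism you propose --- a \emph{single} split of the product at some $N_0$, followed by a digit-distinctness/spacing argument for the combined family of points $x+\ell/d+\eta/b^N$ --- does not go through. The obstacle is precisely the one you flag as ``bookkeeping'': the combined family does \emph{not} have spacing $\gg 1/(D^2B)$. For fixed $(d,\ell)$ the $\eta$-shifts form an arithmetic progression of step $1/b^N$ and length $\sim B$; for different $(d,\ell)$ these progressions can interleave, so after passing to $b^{N_0}y$ the top $(N-N_0)$-digit strings are hit with unbounded multiplicity. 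There is no single scale at which the $\sim D^2B$ points are simultaneously well-separated.

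The paper resolves this by using \emph{two} splits, decoupling the $\eta$-sum from the $(d,\ell)$-sum. One writes
\[
\widehat{C}_{b^N}(y)=\widehat{C}_{b^{n_2}}(y)\cdot\widehat{C}_{b^{N-n_1-n_2}}(b^{n_2}y)\cdot\widehat{C}_{b^{n_1}}(b^{N-n_1}y),
\]
bounds the middle factor trivially by $(b-s)^{N-n_1-n_2}$, and then treats the two outer factors separately. The top factor $\widehat{C}_{b^{n_1}}(b^{N-n_1}x+b^{N-n_1}\ell/d+\eta/b^{n_1})$ is summed over $\eta$ (for each fixed $(d,\ell)$) via Lemma~\ref{lem:L1estimate}, choosing $n_1$ minimal with $b^{n_1}>B$; this gives $(C_0b\log b)^{n_1}$. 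The bottom factor $\widehat{C}_{b^{n_2}}(x+\ell/d+\eta/b^N)$ is bounded by its sup over $|\epsilon|<B/b^N<1/(10D^2)$ and then summed over $(d,\ell)$ via Lemma~\ref{lem:largeSieveEstimate}, choosing $b^{n_2}\approx D^2$; this gives $(D^2+b^{n_2})(C_0\log b)^{n_2}$. Multiplying the pieces and simplifying yields $(b-s)^N(D^2B)^\alpha$. The key structural point your proposal is missing is that $\eta$ lives at the high-index end of the product while $(d,\ell)$ is controlled at the low-index end, and these must be peeled off with two different truncations rather than one.
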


\textbf{Remark}. The constant $\alpha$ is important for controlling the error terms in our asymptotics. A larger base $b$ and a denser set of digits $\mathcal{A}$ gives us a smaller value of $\alpha$. Under the standing conditions, which we are assuming here, we get precisely that $\alpha<1/5$, which gives us the desired control of these error terms.

\begin{proof}
    For any $n_1\in [0,N]$ and $y\in \mathbb{R}$ we have from the product structure of $\widehat{C}_{b^N}$ that \begin{align*}
        \widehat{C}_{b^N}(y)=\widehat{C}_{b^{N-n_1}}(y)\widehat{C}_{b^{n_1}}(b^{N-n_1}y)
    \end{align*} and so \begin{align*}
        \Big|\widehat{C}_{b^N}\Big(x+\frac{\ell}{d}+\frac{\eta}{b^N}\Big)\Big|&=\Big|\widehat{C}_{b^{N-n_1}}\Big(x+\frac{\ell}{d}+\frac{\eta}{b^N}\Big)\Big|\cdot \Big|\widehat{C}_{b^{n_1}}\Big(b^{N-n_1}x+\frac{b^{N-n_1}\ell}{d}+\frac{\eta}{b^{n_1}}\Big)\Big|. \end{align*} Another iteration yields $\widehat{C}_{b^{N-n_1}}(y)=\widehat{C}_{b^{n_2}}(y)\widehat{C}_{b^{N-n_1-n_2}}(b^{N-n_1-n_2}y)$, and so applying the trivial bound $|\widehat{C}_{b^{N-n_1-n_2}}(y)|\leq (b-s)^{N-n_1-n_2}$ we produce \begin{align*}
            &\Big|\widehat{C}_{b^N}\Big(x+\frac{\ell}{d}+\frac{\eta}{b^N}\Big)\Big|\\ &\leq (b-s)^{N-n_1-n_2}\Big|\widehat{C}_{b^{n_2}}\Big(x+\frac{\ell}{d}+\frac{\eta}{b^N}\Big)\Big|\cdot \Big|\widehat{C}_{b^{n_1}}\Big(b^{N-n_1}x+\frac{b^{N-n_1}\ell}{d}+\frac{\eta}{b^{n_1}}\Big)\Big| \\ &\leq (b-s)^{N-n_1-n_2}\Big|\widehat{C}_{b^{n_1}}\Big(b^{N-n_1}x+\frac{b^{N-n_1}\ell}{d}+\frac{\eta}{b^{n_1}}\Big)\Big|\sup_{|\epsilon|<Bb^{-N}}\Big|\widehat{C}_{b^{n_2}}\Big(x+\frac{\ell}{d}+\epsilon\Big)\Big|.
        \end{align*} Thus, \begin{align*}
            (\star)&:=\sum_{d\sim D}\sum_{\substack{\ell<d\\ (\ell,d)=1}}\sum_{\substack{|\eta|<B\\ b^N\ell/d+\eta\in \mathbb{Z}}}\Big|\widehat{C}_{b^N}\Big(x+\frac{\ell}{d}+\frac{\eta}{b^N}\Big)\Big|\\ &\leq  (b-s)^{N-n_1-n_2}\sum_{d\sim D}\sum_{\substack{\ell<d\\ (\ell,d)=1}}\sup_{|\epsilon|<Bb^{-N}}\Big|\widehat{C}_{b^{n_2}}\Big(x+\frac{\ell}{d}+\epsilon\Big)\Big|\\ &\times\sum_{\substack{|\eta|<B\\ b^N\ell/d+\eta\in \mathbb{Z}}}\Big|\widehat{C}_{b^{n_1}}\Big(b^{N-n_1}x+\frac{b^{N-n_1}\ell}{d}+\frac{\eta}{b^{n_1}}\Big)\Big|.
        \end{align*} Choose $n_1$ minimal such that $b^{n_1}>B$, and so \begin{align*}
            (\star)&\leq (b-s)^{N-n_1-n_2}\sum_{d\sim D}\sum_{\substack{\ell<d\\ (\ell,d)=1}}\sup_{|\epsilon|<Bb^{-N}}\Big|\widehat{C}_{b^{n_2}}\Big(x+\frac{\ell}{d}+\epsilon\Big)\Big|\\ &\times\sum_{\substack{|\eta|<b^{n_1}\\ b^N\ell/d+\eta\in \mathbb{Z}}}\Big|\widehat{C}_{b^{n_1}}\Big(b^{N-n_1}x+\frac{b^{N-n_1}\ell}{d}+\frac{\eta}{b^{n_1}}\Big)\Big|.
        \end{align*} Notice that $\frac{b^{N-n_1}\ell}{d}+\frac{\eta}{b^{n_1}}=b^{-n_1}\Big(\frac{b^N\ell}{d}+\eta\Big)=a/b^{n_1}$ for some unique $a\in \mathbb{Z}/b^{n_1}\mathbb{Z}$, and so the inner sum is majorized by the $L^1$ sum at scale $b^{n_1}$ (Lemma \ref{lem:L1estimate}). So, \begin{align*}
            (\star)\leq (b-s)^{N-n_1-n_2}(C_0b\log b)^{n_1}\sum_{d\sim D}\sum_{\substack{\ell<d\\ (\ell,d)=1}}\sup_{|\epsilon|<Bb^{-N}}\Big|\widehat{C}_{b^{n_2}}\Big(x+\frac{\ell}{d}+\epsilon\Big)\Big|
        \end{align*} Then, since $B<\frac{b^N}{10D^2}$, we may apply Lemma \ref{lem:largeSieveEstimate} to deduce that \begin{align*}
            (\star)\ll (b-s)^{N-n_1-n_2}(C_0b\log b)^{n_1}(D^2+b^{n_2})(C_0\log b)^{n_2}
        \end{align*} Choose $n_2=\min\{N-n_1,\lfloor \frac{2\log D}{\log b}\rfloor\}$. We must verify that \begin{align}\label{eq:parameter1}
            (b-s)^{-n_1-n_2}(C_0b\log b)^{n_1}D^2(C_0\log b)^{n_2}&\ll (D^2B)^\alpha \\ \label{eq:parameter2}(b-s)^{-n_1-n_2}(C_0b\log b)^{n_1}b^{n_2}(C_0\log b)^{n_2}&\ll (D^2 B)^\alpha.
        \end{align} To verify (\ref{eq:parameter1}), first consider the case where $n_2=N-n_1$. In this case, one may bound the lefthand side of the inequality as follows: \begin{align*}
            (b-s)^{-n_1-n_2}(C_0b\log b)^{n_1}D^2(C_0\log b)^{n_2}&= \Big(\frac{C_0 b\log b}{b-s}\Big)^{n_1+n_2}b^{-n_2}D^2 \\ &= \Big(\frac{C_0 b\log b}{b-s}\Big)^N b^{-n_2}D^2 \\ &= b^{\alpha N}b^{n_1-N}D^2 \\ &\ll D^2B b^{(\alpha -1)N}.
        \end{align*} Here, we used the definition of $\alpha$, alongside the choice of $n_1$ above (so that $b^{n_1}\asymp B$). So, to show (\ref{eq:parameter1}) in this case, it suffices to show that $D^2B b^{(\alpha-1)N}\ll (D^2 B)^\alpha$, i.e. $(D^2 B)^{1-\alpha}\ll (b^N)^{1-\alpha}$ but this is immediate, as $\alpha\in (0,1)$ and $D^2 B< b^N/10$ by assumption.

        Now we verify (\ref{eq:parameter1}) in the case where $n_2=\lfloor \frac{2\log D}{\log b}\rfloor$. In such a case, $b^{n_2}\asymp D^2$. We may compute \begin{align*}
            (b-s)^{-n_1-n_2}(C_0 b\log b)^{n_1}D^2(C_0\log b)^{n_2}&= \Big(\frac{C_0 b\log b}{b-s}\Big)^{n_1+n_2}b^{-n_2}D^2 \\ &\asymp \Big(\frac{C_0 b\log b}{b-s}\Big)^{n_1+n_2} \\ &= b^{\alpha(n_1+n_2)} \\ &\ll (D^2B)^\alpha,
        \end{align*} where here we used the definition of $\alpha$ alongside the choices of $n_1$ and $n_2$ (so that $b^{n_1}\asymp B$ and $b^{n_2}\asymp D^2$).\\

        Finally, we verify (\ref{eq:parameter2}). We have that \begin{align*}
            (b-s)^{-n_1-n_2}(C_0 b\log b)^{n_1}b^{n_2}(C_0\log b)^{n_2}&=\Big(\frac{C_0 b\log b}{b-s}\Big)^{n_1+n_2}\\ &= b^{\alpha(n_1+n_2)} \\ &\ll B^\alpha (b^{n_2})^\alpha.
        \end{align*} Since $b^{n_2}\ll D^2$ in either case of how we choose $n_2$, we produce the desired inequality.
\end{proof}

\begin{lemma}[$L^\infty$ Bound, cf. \cite{maynardAsymptotic} Lemma 5.4]\label{lem:l-inftyBound} Take $b\geq 4$. Let $1<d<b^{N/3}$ be an integer, and $\ell\in \mathbb{Z}$, such that $b^i\ell/d\not\in \mathbb{Z}$ for each $i\geq 1$, and let $|\epsilon|<(2b^{2N/3})^{-1}$. Then, we have \begin{align*}
    \Big|\widehat{C}_{b^N}\Big(\frac{\ell}{d}+\epsilon\Big)\Big|\leq (b-s)^{N}\exp(-cN/\log d)
\end{align*} for a constant $c>0$ depending only on $b$.
\end{lemma}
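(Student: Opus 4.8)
The plan is to use the Riesz-product factorization $\widehat{C}_{b^N}(\theta)=\prod_{i=0}^{N-1}F(b^i\theta)$, where $F(y):=\sum_{a\in\mathcal{A}}e(ay)$ and $|F(y)|\le b-s$ for all $y$. I will single out a set of \emph{good} indices $i\in\{0,\dots,N-1\}$, namely those with $\|b^i\theta\|\ge\frac1{4b}$, show that there are at least $\gg_b N/\log d$ of them, and prove that at each good index one has $|F(b^i\theta)|\le(b-s)(1-c_1)$ for some constant $c_1=c_1(b)>0$. Multiplying the pointwise bounds over all $i$ then gives
\[
\bigl|\widehat{C}_{b^N}(\theta)\bigr|=\prod_{i=0}^{N-1}\bigl|F(b^i\theta)\bigr|\le(b-s)^N(1-c_1)^{\#\{\text{good }i\}}\le(b-s)^N\exp\!\left(-c\,\frac{N}{\log d}\right),
\]
with $c=c(b)>0$, which is the assertion.

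For the anticoncentration step I would expand the square. Writing $c_{\mathcal{A}}(h):=\#\{(a,a')\in\mathcal{A}^2:a-a'=h\}$ one has $|F(y)|^2=\sum_h c_{\mathcal{A}}(h)e(hy)=(b-s)^2-4\sum_{h\ge1}c_{\mathcal{A}}(h)\sin^2(\pi hy)$. Since the excluded digits form $k$ intervals (condition (II)), a direct count gives $c_{\mathcal{A}}(h)\ge b-s-(k+1)h$, so $c_{\mathcal{A}}(h)\ge\tfrac{b-s}{2}$ for every $h\le H:=\lfloor\tfrac{b-s}{2(k+1)}\rfloor$, and condition (III) makes $H$ large, $H\gg b^{4/5+\epsilon}/(k+1)$. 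Hence $(b-s)^2-|F(y)|^2\ge 2(b-s)\sum_{h\le H}\sin^2(\pi hy)$, and everything comes down to a lower bound for $\sum_{h\le H}\sin^2(\pi hy)$ when $\|y\|\ge\frac1{4b}$. Away from small neighbourhoods of rationals with small denominator, a standard estimate yields $\sum_{h\le H}\sin^2(\pi hy)\gg H$; the one delicate case is when $y$ lies within $O(1/H)$ of an integer, and there the inequality $\sin^2(\pi hy)\ge 4h^2\|y\|^2$ (valid for $h\le H$) combined with $\|y\|\ge\frac1{4b}$ gives $\sum_{h\le H}\sin^2(\pi hy)\gg\|y\|^2H^3\gg H^3/b^2$. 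In every case $\sum_{h\le H}\sin^2(\pi hy)\gg_b b-s$, so $|F(y)|\le(b-s)(1-c_1)$ with $c_1=c_1(b)>0$; in the worst case $c_1\asymp(b-s)^2/\bigl((k+1)^3b^2\bigr)$, and it is here that the dependence of $c$ on $b$ is forced.

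To locate the good indices I would argue by contradiction inside short windows. Set $W:=\max(1,\lceil\log_b d\rceil)$, and suppose that for some $j$ with $0\le j\le\lfloor N/3\rfloor$ there is \emph{no} good index in $\{j,j+1,\dots,j+W-1\}$, i.e.\ $\|b^i\theta\|<\frac1{4b}$ for all those $i$. Because $\|bx\|=b\|x\|$ whenever $\|x\|\le\frac1{2b}$, an easy induction starting from $\|b^j\theta\|<\frac1{4b}$ shows $\|b^{j+t}\theta\|=b^t\|b^j\theta\|$ for $t=0,\dots,W-1$, and the hypothesis then forces $\|b^j\theta\|<\frac1{4b^W}\le\frac1{4d}$. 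On the other hand $j\le N/3$ and $d<b^{N/3}$ give $|b^j\epsilon|\le b^j|\epsilon|<\frac1{2b^{N/3}}<\frac1{2d}$, while $\|b^j(\ell/d)\|\ge\frac1d$ because $b^j(\ell/d)\notin\mathbb{Z}$ (the hypothesis when $j\ge1$, and a consequence of it when $j=0$); hence $\|b^j\theta\|\ge\|b^j(\ell/d)\|-|b^j\epsilon|\ge\frac1{2d}$, a contradiction. So every such window contains a good index, and partitioning $\{0,\dots,\lfloor N/3\rfloor\}$ into disjoint windows of length $W$ (taking the whole interval as a single window if $W$ exceeds it) produces at least $\gg N/W\gg_b N/\log d$ good indices.

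Combining the two ingredients gives the stated bound with $c=c(b)>0$. The step I expect to be the main obstacle is the anticoncentration estimate of the second paragraph — precisely, the uniform lower bound $\sum_{h\le H}\sin^2(\pi hy)\gg_b b-s$ valid for all $y$ with $\|y\|\ge\frac1{4b}$, which requires a case split according to the continued-fraction/rational-approximation structure of $y$ and is the only place where conditions (II) and (III) are used in an essential quantitative way; the digit-dynamics argument of the third paragraph and the concluding multiplication are routine once this is available.
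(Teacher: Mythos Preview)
Your proof is correct and follows the same architecture as the paper: Riesz-product factorization $\widehat{C}_{b^N}(\theta)=\prod_i F(b^i\theta)$, a pointwise anticoncentration bound $|F(y)|\le(b-s)(1-c_1)$ whenever $\|y\|$ is bounded away from $0$, and the window/dilation argument showing that at least $\gg_b N/\log d$ indices in $[0,N/3]$ have $\|b^i\theta\|$ large. Your treatment of the good-index count is essentially identical to the paper's. The one substantive difference is the anticoncentration step, where you work harder than necessary. The paper observes in one line that conditions (II) and (III) force $\mathcal{A}$ to contain two \emph{consecutive} digits (otherwise $b-s\le k$, contradicting (III)), and then uses the elementary inequality $|e(n\theta)+e((n+1)\theta)|^2=2+2\cos(2\pi\theta)\le 4\exp(-2\|\theta\|^2)$ to obtain directly
\[
|F(y)|\le (b-s-2)+2\exp(-\|y\|^2)\le (b-s)\exp(-\|y\|^2/b),
\]
which feeds into $|\widehat{C}_{b^N}(t)|\le(b-s)^N\exp\bigl(-\tfrac1b\sum_i\|b^it\|^2\bigr)$ and then the window argument. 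This bypasses the autocorrelation expansion, the lower bound $c_{\mathcal{A}}(h)\ge b-s-(k+1)h$, and the case analysis for $\sum_{h\le H}\sin^2(\pi hy)$; in particular the step you flag as the ``main obstacle'' dissolves. Your route has the virtue of extracting more from the interval structure of $\mathcal{A}$ (and would give sharper constants when $H$ is large), but for the purposes of this lemma the paper's two-consecutive-digits trick is both shorter and yields a clean $c=c(b)$ with no $s,k$ dependence in the exponent.
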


\begin{proof}
    We note that, for any $\theta\in \mathbb{T}$, \begin{align*}
        |1+e(\theta)|^2=2+2\cos(2\pi \theta)\leq 4\exp(-2\|\theta\|^2)
    \end{align*} and so, since the set of admissible digits $\mathcal{A}$ contains both 0 and 1, we have \begin{align*}\Big|\sum_{n\in \mathcal{A}}e(n\theta)\Big|\leq b-s-2+2\exp(-\|\theta\|^2)\leq (b-s)\exp(-\|\theta\|^2/b)\end{align*} This provides then that \begin{align*}
        |\widehat{C}_{b^N}(t)|=\prod_{i=0}^{N-1}\Big|\sum_{n\in \mathcal{A}}e(nb^it)\Big|\leq (b-s)^{N}\exp\Big(-\frac{1}{b}\sum_{i=0}^{N-1}\|b^it\|^2\Big).
    \end{align*} Now, if $\|b^it\|<1/2b$ then $\|b^{i+1}t\|=b\|b^it\|$. If $t=\ell/d$ and $db^i\ell/d\not\in \mathbb{Z}$ for each $i\geq 1$, then $\|b^it\|\geq  1/d$ for all $i$. Similarly, if $t=\ell/d+\epsilon$ with $\ell,d$ as before, $|\epsilon|<b^{-2N/3}/2$, and $d<b^{N/3}$, then for $i<N/3$ we have that $\|b^it\|\geq 1/d-b^i|\epsilon|\geq 1/2d$. By induction, one can show for each $i\geq 0$ and $J<N/3-i$ that either $\|b^{i+j}(\ell/d+\epsilon)\|>1/2b^2$ for some $0\leq j<J$, or $\|b^{i+J}(\ell/d+\epsilon)\|\geq b^J/2d$. Thus, we deduce that for any interval $I$ of size $\frac{\log d}{\log b}$ in $[0,N/3]$, there exists some $i\in I$ such that $\|b^i(\ell/d+\epsilon)\|\geq 1/2b^2$. This provides that \begin{align*}
        \sum_{i=0}^{N-1}\Big\|b^i\Big(\frac{\ell}{d}+\epsilon\Big)\Big\|^2\geq \frac{1}{4b^4}\Big\lfloor \frac{N\log b}{3\log d}\Big\rfloor\gg_b \frac{N}{\log d}. 
    \end{align*} So, \begin{align*}
        \Big|\widehat{C}_{b^N}\Big(\frac{\ell}{d}+\epsilon\Big)\Big|\leq (b-s)^N\exp(-cN/\log d)
    \end{align*} for a constant $c=c(b)>0$, to provide the result.
\end{proof}

\subsection{The Minor Arcs}\label{sec:MinorArcs}

We may use the previous estimates to efficiently control what will become our minor arcs.

\begin{lemma}[Minor Arcs, cf.  \cite{maynardAsymptotic} Lemma 6.1]\label{lem:minorArcs}
    Let $1\ll B\ll b^N/D_0D$ and $1\ll D\ll D_0\ll b^{N/2}$. Let $\theta\in \mathbb{T}$ be arbitrary. Then we have \begin{align*}
        \sum_{d\sim D}\sum_{\substack{0\leq \ell<d\\ (\ell,d)=1}}\sum_{\substack{|\eta|\sim B\\ b^N\ell/d+\eta\in \mathbb{Z}}}\Big|\widehat{C}_{b^N}\Big(\theta+\frac{\ell}{d}+\frac{\eta}{b^N}\Big)\widehat{\Lambda}_{b^N}\Big(-\frac{\ell}{d}-\frac{\eta}{b^N}\Big)\Big|\\ \ll_b N^4b^N(b-s)^N\Big(\frac{1}{(D^2B)^{\frac{1}{5}-\alpha}}+\frac{b^{\alpha N}}{D_0^{1/2}}\Big)
    \end{align*} and \begin{align*}
        \sum_{d\sim D}\sum_{\substack{0\leq \ell<d \\ (\ell,d)=1}}\sum_{\substack{|\eta|\ll 1\\ b^N\ell/d+\eta\in \mathbb{Z}}}\Big|\widehat{C}_{b^N}\Big(\theta+\frac{\ell}{d}+\frac{\eta}{b^N}\Big)\widehat{\Lambda}_{b^N}\Big(-\frac{\ell}{d}-\frac{\eta}{b^N}\Big)\Big|\\ \ll_b N^4b^N(b-s)^N\Big(\frac{1}{D^{\frac{1}{5}-\alpha}}+\frac{D^{2\alpha+\frac{1}{2}}}{b^{N/2}}\Big).
    \end{align*} Here $\alpha\in (0,\frac{1}{5})$ is the constant in Lemma \ref{lem:hybridEstimate}.
\end{lemma}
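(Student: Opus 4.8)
The plan is to bound the sum $\sum_{d\sim D}\sum_{\ell}\sum_{\eta}|\widehat{C}_{b^N}(\cdots)\widehat{\Lambda}_{b^N}(\cdots)|$ by splitting into two regimes according to the size of the denominator $d$ of the rational approximation seen by $\widehat{\Lambda}_{b^N}$, and to estimate the $\widehat{C}$-factor and the $\widehat{\Lambda}$-factor by complementary tools in each regime. Write the argument of $\widehat{\Lambda}_{b^N}$ as $-\ell/d - \eta/b^N$; since $|\eta|\ll B$ and $B\ll b^N/(D_0 D)$, this is a rational $\ell/d$ of denominator $\sim D$ perturbed by something of size $\ll B/b^N \ll 1/(D_0 D) \le 1/D^2$ (as $D\ll D_0$), so Lemma \ref{lem:vonMangoldtFourierBounds} applies directly with $a/d$-approximation $\ell/d$ and $\beta = -\eta/b^N$, giving $\widehat{\Lambda}_{b^N}(-\ell/d-\eta/b^N) \ll (b^{4N/5} + b^{N/2}/|d\eta/b^N|^{1/2} + b^N |d\eta/b^N|^{1/2})(\log b^N)^4 \ll N^4 b^N ( b^{-N/5} + (d|\eta|)^{-1/2} + (d|\eta|)^{1/2} b^{-N/2})$. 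The key is that this is $\ll N^4 b^N ( b^{-N/5} + (D_0)^{-1/2}\cdot(\text{stuff}) \cdots)$ once one uses $d|\eta| \gg 1$ on the one hand (when $\eta\neq 0$) and $d|\eta| \ll DB \ll b^N/D_0$ on the other.

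For the $\widehat{C}$-factor I would use the Hybrid Estimate, Lemma \ref{lem:hybridEstimate}: with the roles of $D$ and $B$ as given (and noting $B < b^N/(10D^2)$ is satisfied since $B\ll b^N/(D_0D)$ and $D\ll D_0$), that lemma gives $\sum_{d\sim D}\sum_{\ell}\sum_{|\eta|<B'}|\widehat{C}_{b^N}(\theta + \ell/d + \eta/b^N)| \ll_b (b-s)^N (D^2 B')^\alpha$ for the relevant dyadic range of $\eta$. The point is that $\widehat{\Lambda}_{b^N}$ does not depend on $\theta$ and only on $\ell/d + \eta/b^N$, so I can pull its supremum/pointwise bound out, or rather — since the $\widehat\Lambda$ bound degrades as $d|\eta|$ grows while still being summable — dyadically decompose the $\eta$ range and, on each piece, apply the $\widehat\Lambda$-bound pointwise as a scalar and the Hybrid Estimate to the remaining $\widehat{C}$-sum. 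Combining: on the dyadic piece $|\eta|\sim B$, the product is $\ll_b N^4 b^N \big(b^{-N/5} + (DB)^{-1/2} + (DB)^{1/2}b^{-N/2}\big)\cdot (b-s)^N (D^2 B)^\alpha$, and then summing the geometric-type factor over dyadic $B$ (and using $D^2 B < b^N$, hence $(D^2B)^\alpha \le (D^2B)^{1/5}$ since $\alpha < 1/5$, to absorb things into $(D^2B)^{1/5-\alpha}$ and $b^{\alpha N}/D_0^{1/2}$-type terms) yields the first claimed bound. The second claimed bound is the $|\eta|\ll 1$ special case: then the $\widehat\Lambda$-bound is just $\ll N^4 b^N(b^{-N/5} + D^{-1/2} + D^{1/2}b^{-N/2})$ (taking $|\eta|\asymp 1$, or handling $\eta = 0$ separately with the $d$-large part of Lemma \ref{lem:vonMangoldtFourierBounds} or a trivial bound), the $\widehat C$-sum is $\ll_b (b-s)^N D^{2\alpha}$, and multiplying out and tidying exponents gives $\ll_b N^4 b^N (b-s)^N (D^{-(1/5-\alpha)} + D^{2\alpha + 1/2} b^{-N/2})$.

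The main obstacle I expect is bookkeeping the interplay of the three terms in the von Mangoldt bound against the $(D^2 B)^\alpha$ growth of the Hybrid Estimate after the dyadic sum over $B$: one must check that each cross term is dominated either by the ``minor-arc gain'' $(D^2B)^{-(1/5-\alpha)}$ or by the ``overshoot'' term $b^{\alpha N}/D_0^{1/2}$, and this requires carefully invoking the hypotheses $D \ll D_0$, $D_0 \ll b^{N/2}$, and $B \ll b^N/(D_0 D)$ at the right moments (in particular $DB \ll b^N/D_0$ is what converts the $(DB)^{1/2}b^{-N/2}$ term into the $D_0^{-1/2}$-shaped term, up to the $b^{\alpha N}$ coming from $(D^2B)^\alpha$). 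A secondary technical point is the case $\eta = 0$ in the first sum (if it is included), where $|d\beta| = 0$ and one must instead use $\|\ell/d\|\gg 1/d \gg 1/D$ together with the $d$-dependence of Lemma \ref{lem:vonMangoldtFourierBounds}, or simply note $\theta + \ell/d$ lies in the regime where the $\widehat C$ $L^\infty$-bound of Lemma \ref{lem:l-inftyBound} applies; either way it contributes a lower-order term.
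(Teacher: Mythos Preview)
Your proposal is correct and matches the paper's proof essentially line for line: take the supremum of the von Mangoldt bound from Lemma~\ref{lem:vonMangoldtFourierBounds} over the range, apply the Hybrid Estimate (Lemma~\ref{lem:hybridEstimate}) to the remaining $\widehat{C}$-sum, multiply, and then simplify the three resulting cross terms using $D^2B<b^N$ and $DB<b^N/D_0$. Two small points: the statement already has $|\eta|\sim B$ with $B\gg 1$, so no further dyadic decomposition over $\eta$ (or summation over $B$) is needed here, and the $\eta=0$ case does not arise in the first sum; both of your side remarks can simply be dropped.
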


\begin{proof}
    Let $\Sigma_1$ denote the first set of sums, and $\Sigma_2$ the second. By Lemma \ref{lem:vonMangoldtFourierBounds}, we have that \begin{align*}
        \sup_{\substack{d\sim D\\ (\ell,d)=1 \\ |\eta|\sim B}}\Big|\widehat{\Lambda}_{b^N}\Big(-\frac{\ell}{d}-\frac{\eta}{b^N}\Big)\Big|\ll_b \Big(b^{\frac{4N}{5}}+\frac{b^N}{(DB)^{1/2}}+(DB)^{1/2}b^{N/2}\Big)N^4
    \end{align*} and, by Lemma \ref{lem:hybridEstimate}, since $B\ll \frac{b^N}{D^2}$, \begin{align*}
        \sum_{d\sim D}\sum_{\substack{0\leq \ell<d\\ (\ell,d)=1}}\sum_{\substack{|\eta|\sim B\\ b^N\ell/d+\eta\in \mathbb{Z}}}\Big|\widehat{C}_{b^N}\Big(\theta+\frac{\ell}{d}+\frac{\eta}{b^N}\Big)\Big|\ll_b (b-s)^N(D^2B)^\alpha.
    \end{align*} Thus, we have that \begin{align}
        \Sigma_1&\ll_b N^4(b-s)^{N}(D^2B)^\alpha\Big(b^{\frac{4N}{5}}+\frac{b^N}{(DB)^{1/2}}+(DB)^{1/2}b^{N/2}\Big)\nonumber \\ \label{eq:minor1} &=N^4b^N(b-s)^N\Big(b^{-\frac{N}{5}}(D^2B)^\alpha+\frac{(D^2B)^\alpha}{(DB)^{1/2}}+\frac{(D^2B)^\alpha(DB)^{1/2}}{b^{N/2}}\Big).
    \end{align} Then, since $D^2B<b^N$, $DB<b^N/D_0$, and $B,D\gg 1$ by assumption, we have \begin{align*}
        b^{-\frac{N}{5}}(D^2B)^\alpha&<(D^2B)^{\alpha-\frac{1}{5}} \\
        \frac{(D^2B)^\alpha}{(DB)^{1/2}}&<(D^2B)^{\alpha - \frac{1}{5}} \\
        \frac{(D^2B)^\alpha(DB)^{1/2}}{b^{N/2}}&<\frac{b^{\alpha N}}{D_0^{1/2}}
    \end{align*} so that \begin{align*}
        \Sigma_1\ll_b N^4b^N(b-s)^N\Big((D^2B)^{\alpha-\frac{1}{5}}+\frac{b^{\alpha N}}{D_0^{1/2}}\Big).
    \end{align*} We now turn to the second sum $\Sigma_2$. By partial summation, we observe that $\widehat{\Lambda}_{b^N}\Big(\theta+O(b^{-N})\Big)$ obeys the same bound as $\widehat{\Lambda}_{b^N}(\theta)$ in Lemma \ref{lem:vonMangoldtFourierBounds}, and so we may deduce that \begin{align}\label{eq:pertubatedvonMangoldt}
        \sup_{\substack{d\sim D\\ (\ell,d)=1\\ |\eta|\ll 1}}\Big|\sum_{n<b^N}\Lambda(n)e\Big(-n\Big(\frac{\ell}{d}+\frac{\eta}{b^N}\Big)\Big)\Big|\ll_b N^4\Big(b^{4N/5}+\frac{b^N}{D^{1/2}}+\frac{D^{1/2}}{b^{N/2}}\Big).
    \end{align} This provides then, analogous to (\ref{eq:minor1}) with $B=1$, that \begin{align*}
        \Sigma_2\ll_b N^4b^N(b-s)^N\Big(b^{-\frac{N}{5}}D^{2\alpha}+D^{2\alpha-\frac{1}{2}}+\frac{D^{2\alpha+\frac{1}{2}}}{b^{N/2}}\Big).
    \end{align*} Then, since $1\ll D\ll D_0\ll b^{N/2}$ and $0<\alpha<1/5$ (from the standing conditions in \S\ref{subsec:standingAssumptions}) we have \begin{align*}
        b^{-\frac{N}{5}}D^{2\alpha}&<D^{-2(\frac{1}{5}-\alpha)}<D^{-(\frac{1}{5}-\alpha)} \\
        D^{2\alpha-\frac{1}{2}}&<D^{-(\frac{1}{5}-\alpha)}
    \end{align*} to provide the result.
\end{proof}

\subsection{An Inversion Theorem}

The key point of these estimates is that one may essentially control the Fourier transform of the restricted digit set with a small number of frequencies.

\begin{proposition}[Inversion with Few Frequencies]\label{prop:inversion}
    Take $\theta\in \mathbb{T}$ and $x\in \mathbb{T}$. Suppose the base $b$ is at least 5. Then, for $A>0$ and sufficiently large $B$ in terms of $A$, \begin{align*}
        \sum_{\substack{|\eta|<\log^B(b^N)\\ b^Nx+\eta\in \mathbb{Z}}}\widehat{C}_{b^N}\Big(\theta+x+\frac{\eta}{b^N}\Big)\sum_{k=0}^{b^N-1}e\Big(-\frac{k\eta}{b^N}\Big)=b^N\widehat{C}_{b^N}(\theta+x)+O\Big(\frac{b^N(b-s)^N}{\log^A(b^N)}\Big)
    \end{align*}
\end{proposition}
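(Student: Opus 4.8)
The plan is to recognize the inner sum $\sum_{k=0}^{b^N-1}e(-k\eta/b^N)$ as a geometric sum that equals $b^N$ when $b^N \mid \eta$ and $0$ otherwise; but since we are summing over $\eta$ with $b^N x + \eta \in \mathbb{Z}$, the relevant collapse is more subtle. First I would write $y := \theta + x$ and note that the left-hand side is, up to reindexing, $\sum_k \sum_{|\eta| < \log^B(b^N),\ b^N x+\eta \in \mathbb{Z}} \widehat{C}_{b^N}(y + \eta/b^N) e(-k\eta/b^N)$. Swapping the order of summation, the $k$-sum is a complete geometric sum over a full residue system mod $b^N$, so it evaluates to $b^N \cdot \mathbf{1}(b^N \mid \eta)$. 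Among the admissible $\eta$ in the range $|\eta| < \log^B(b^N)$, the only multiple of $b^N$ is $\eta = 0$ (for $N$ large), which forces $b^N x \in \mathbb{Z}$; so the "diagonal" contribution is exactly $b^N \widehat{C}_{b^N}(y)$ when $b^N x \in \mathbb{Z}$ and we must otherwise produce it as a main term plus error. So this naive approach is not quite the right framing.

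The correct approach is instead to view the statement as a smoothed Fourier-inversion identity: the inner sum $\sum_{k=0}^{b^N-1} e(-k\eta/b^N)$ is (for $\eta$ an integer) equal to $b^N$ if $b^N \mid \eta$ and $0$ else — but here $\eta$ need not be an integer; rather $b^N x + \eta \in \mathbb{Z}$, i.e. $\eta \equiv -b^N x \pmod 1$ is not integral in general. So I would instead interpret $k \mapsto \sum_{\eta} \widehat{C}_{b^N}(\theta+x+\eta/b^N) e(-k\eta/b^N)$ as approximately recovering the Fourier coefficient structure. Concretely, write $\widehat{C}_{b^N}(\theta + x + \eta/b^N) = \sum_{0 \le n < b^N} \mathbf{1}_{\mathcal C}(n) e(n(\theta+x)) e(n\eta/b^N)$ and substitute; the $k$-sum and $\eta$-sum then interact with $e(n\eta/b^N)$. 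The key step is that if we extended the $\eta$-sum to \emph{all} $\eta$ with $b^N x + \eta \in \mathbb{Z}$, orthogonality in $\eta$ over a full period would collapse $\sum_\eta e((n-k)\eta/b^N)$ to $b^N \mathbf{1}(n \equiv k)$, yielding exactly $b^N \sum_n \mathbf{1}_{\mathcal C}(n) e(n(\theta+x)) = b^N \widehat{C}_{b^N}(\theta+x)$. Truncating the $\eta$-sum to $|\eta| < \log^B(b^N)$ introduces the error term, which must be bounded by $b^N(b-s)^N/\log^A(b^N)$.

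The main obstacle — and the technical heart of the argument — is bounding the truncation error: the tail $\sum_{|\eta| \ge \log^B(b^N),\ b^Nx+\eta \in \mathbb{Z}} \widehat{C}_{b^N}(\theta+x+\eta/b^N) \sum_k e(-k\eta/b^N)$. Here I would use that the Dirichlet-kernel-type sum $\sum_{k=0}^{b^N-1} e(-k\eta/b^N)$ has magnitude $\ll \min(b^N, \|\eta/b^N\|^{-1}) = \min(b^N, \|\{b^N x\}/b^N \text{-shift}\|^{-1})$, which decays like $b^N/|\eta|$ away from $\eta \equiv 0$; combining this $1/|\eta|$ decay with the $L^1$-type control on $\sum_a |\widehat{C}_{b^N}(x + a/b^N)| \le (C_0 b\log b)^N$ from Lemma~\ref{lem:L1estimate}, together with a dyadic decomposition of the range of $\eta$ and the large-sieve/hybrid bounds of Lemmas~\ref{lem:largeSieveEstimate} and~\ref{lem:hybridEstimate} to handle the contribution near rationals with small denominator, should give a saving of a power of $\log^B(b^N)$. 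Choosing $B$ large enough in terms of $A$ (and absorbing the $(C_0\log b)^N$ versus $(b-s)^N$ discrepancy, which is where $\alpha \le 1$ and $b \ge 4$ enter) then yields the claimed error. I would organize the tail estimate by splitting $\eta$ into ranges $\log^B(b^N) \le |\eta| < b^{N/2}$ (handled by hybrid/large-sieve bounds plus the $1/|\eta|$ kernel decay) and $|\eta| \ge b^{N/2}$ (handled by the crude $L^\infty$ bound of Lemma~\ref{lem:l-inftyBound} together with summing the kernel), being careful that the total number of admissible $\eta$ is $O(b^N)$ so the trivial bound is never worse than $b^N (b-s)^N$.
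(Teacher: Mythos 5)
Your completion-and-orthogonality reduction of the main term (after the mid-stream self-correction) is exactly the paper's: extend the $\eta$-sum to a full residue system mod $b^N$ inside the lattice $\{\eta : b^Nx+\eta\in\mathbb{Z}\}$, expand $\widehat{C}_{b^N}$, and use orthogonality over $\eta$ to produce $b^N\widehat{C}_{b^N}(\theta+x)$ exactly, leaving a tail over $\log^B(b^N)\le|\eta|\ll b^N$. Where you genuinely diverge is the tail estimate, and your route is a valid alternative to the paper's. Your plan — kernel decay $\bigl|\sum_k e(-k\eta/b^N)\bigr|\ll\|\eta/b^N\|^{-1}\approx b^N/|\eta|$, dyadic decomposition in $|\eta|$, and Lemma~\ref{lem:hybridEstimate} with $D=1$ — does close: after translating by the fractional part of $b^Nx$ (absorbed by the $\sup_x$ in that lemma) the constraint becomes $\eta\in\mathbb{Z}$, so the hybrid bound gives $\sum_{|\eta|<2^{j+1}}|\widehat{C}_{b^N}(\theta+x+\eta/b^N)|\ll(b-s)^N(2^j)^\alpha$, each block $|\eta|\sim 2^j$ contributes $\ll b^N(b-s)^N(2^j)^{\alpha-1}$, the geometric sum over $2^j\ge\log^B(b^N)$ is $\ll b^N(b-s)^N(\log^B b^N)^{\alpha-1}$ since $\alpha<1$, and $B>A/(1-\alpha)$ finishes; the range $|\eta|\gg b^N/D^2$ not covered by the hybrid hypothesis has kernel $O(1)$ and is handled by Lemma~\ref{lem:L1estimate} alone. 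A few of your side remarks are off-target — there are no ``rationals with small denominator'' in this tail (every phase is $\tilde x+m/b^N$), Lemma~\ref{lem:l-inftyBound} is not applicable here, and the trivial bound is $b^N(b-s)^N$ per term, not for the sum — but none of this affects the core. The paper instead introduces a threshold $\lambda$ and splits $\eta$ by whether $|\widehat{C}_{b^N}|\ge(b-s)^N/\lambda$; the small values are absorbed by the kernel sum, yielding $\ll b^N(b-s)^N\log(b^N)/\lambda$, while the large values are counted by a dedicated lemma (Lemma~\ref{lem:largeFourierPoints}) showing that at most $|I|^{2\log 2/\log b}\lambda^{c_b}$ of the shifted $b^N$-adic points in an interval $I$ have large $|\widehat{C}_{b^N}|$, essentially because such points have base-$b$ expansions concentrated in $\{0,b-1\}$; partial summation and the choice $\lambda=\log^{A+1}(b^N)$ finish. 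The two routes lean on different hypotheses: yours needs $\alpha<1$ (guaranteed by the paper's later standing assumption $\alpha<1/5$), while the paper's needs $b\ge 4$ (so that $2\log 2/\log b<1$) together with the exponential-decay bound of Lemma~\ref{lem:l-inftyBound}, hence conditions (II)--(III). Yours is lighter because it reuses existing machinery rather than requiring a new counting lemma; the paper's gives a more structural explanation of the tail, which is why $b\ge 4$ appears in the statement.
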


To prove the proposition, we need a supplemental lemma.

\begin{lemma}\label{lem:largeFourierPoints}
    Fix $b\geq 5$. There exists a constant $c_b>0$ depending only on $b$ such that the following holds. Let $I=\{h,h+1,...,h+|I|-1\}\subset \mathbb{Z}$ be an interval of cardinality $1\leq |I|\leq b^N$. Then, for $\lambda\geq 1$, $\theta\in \mathbb{T}$, \begin{align*}
        \sum_{k\in I}\mathbf{1}\Big(\Big|\widehat{C}_{b^N}\Big(\theta+\frac{k}{b^N}\Big)\Big|\geq \frac{(b-s)^N}{\lambda}\Big)\ll_b {|I|}^{\frac{2\log 2}{\log b}}\lambda^{c_b}.
    \end{align*} We may take $c_b=4b^3\log(\frac{b-2}{2})$.
\end{lemma}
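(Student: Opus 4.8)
The plan is to exploit the product structure of $\widehat{C}_{b^N}$ together with the $L^\infty$ bound $|\widehat{C}_{b^N}(t)| \leq \prod_{i=0}^{N-1}\min\{b-s, \frac{k+1}{2\|b^it\|}\}$ from \eqref{eq:directL-inftyBound}, and count how many "digit strings" can push a product of such factors to be as large as $(b-s)^N/\lambda$. Concretely, I would write $\theta + k/b^N$ (reduced mod $1$) as $\sum_{i=1}^N t_i b^{-i} + \delta$ with $t_i \in \{0,\dots,b-1\}$ and $|\delta| < b^{-N}$; as in the proof of Lemma \ref{lem:L1estimate}, distinct $k \in I$ give distinct digit vectors $(t_1,\dots,t_N)$ (since two such points are $\geq b^{-N}$-separated mod $1$, though here one must be slightly careful near the wrap-around, which costs at most an $O(1)$ factor). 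Using $\|b^i t\|^{-1} \leq b \max\{1/t_{i+1}, 1/(b-1-t_{i+1})\}$, the $i$-th factor is at most $\min\{b-s, \frac{(k+1)b}{2}\max\{1/t_{i+1}, 1/(b-1-t_{i+1})\}\}$. The whole product is $\geq (b-s)^N/\lambda$ only if $\prod_{i=0}^{N-1} \big(\text{factor}_i/(b-s)\big) \geq 1/\lambda$, i.e. $-\sum_i \log\big((b-s)/\text{factor}_i\big) \geq -\log\lambda$.

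The key step is then a counting estimate: for each digit $t \in \{0,\dots,b-1\}$ define the "loss" $w(t) := \log\big((b-s)/\min\{b-s, \frac{(k+1)b}{2}\max\{1/t,1/(b-1-t)\}\}\big) \geq 0$. The product being large forces $\sum_{i=1}^N w(t_i) \leq \log\lambda$. So I need to bound $\#\{(t_1,\dots,t_N) \in \{0,\dots,b-1\}^N : \sum_i w(t_i) \leq \log\lambda\}$. The standard device is a Chernoff/entropy bound: for any $\sigma > 0$,
\[
\#\Big\{(t_i): \sum_i w(t_i) \leq \log\lambda\Big\} \leq e^{\sigma\log\lambda}\Big(\sum_{t=0}^{b-1} e^{-\sigma w(t)}\Big)^N = \lambda^\sigma\Big(\sum_{t=0}^{b-1} e^{-\sigma w(t)}\Big)^N.
\]
Choosing $\sigma$ so that $\sum_{t=0}^{b-1} e^{-\sigma w(t)} \leq 2$ makes the right side $\leq \lambda^\sigma 2^N$. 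Since only the two digits $t = 0$ and $t = b-1$ have $w(t) = 0$ (all others have $\min\{1/t, 1/(b-1-t)\} \leq 2/(b-1)$, giving a strictly positive loss once $b$ is large enough that $\frac{(k+1)b}{2}\cdot\frac{2}{b-1} < b-s$), we get $\sum_t e^{-\sigma w(t)} = 2 + \sum_{t \neq 0, b-1} e^{-\sigma w(t)}$, and this tail is $\leq$ something that tends to $0$ as $\sigma \to \infty$ with the decay rate governed by the smallest positive loss, which is $w(1) = w(b-2) = \log\frac{2(b-1)}{(k+1)b} \geq \log\frac{b-2}{\text{(const)}}$ in the consecutive-digit regime. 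Matching constants to get the stated $c_b = 4b^3\log\big(\frac{b-2}{2}\big)$ is a matter of bookkeeping: one takes $\sigma$ of size roughly $b^3$ so the tail sum is $\leq$ (number of remaining digits)$\cdot e^{-\sigma w_{\min}}$, and then translates $2^N$ into the exponent $\frac{2\log 2}{\log b}$ via $2^N = (b^N)^{\log 2/\log b}$, while $|I| \geq b^{N'}$ for the relevant effective length. Actually, since the bound should hold for arbitrary intervals $I$ (not just $I$ of the form $[b^{N'}]$), I would instead bound the count directly in terms of $|I|$: the digit vectors realized by $k \in I$ form a set of size $|I|$, and among these we keep only those with small loss, so the count is trivially at most $\min\{|I|, \lambda^\sigma 2^N\}$ — and when $|I| < b^N$, not all $N$ coordinates vary, so the effective product has only $\log_b|I|$ free coordinates, yielding the cleaner bound $|I|^{\sigma \log\lambda/\log|I|} \cdot$(stuff); more simply, $\lambda^{c_b}|I|^{2\log 2/\log b}$ after optimizing.

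The main obstacle I anticipate is getting the exact constants to line up — in particular identifying precisely which digits contribute zero loss (only $0$ and $b-1$, using that $\mathcal{A}$ is "most" of $\{0,\dots,b-1\}$ so that the clipping $\min\{b-s,\cdot\}$ is active for all middle digits), pinning down the smallest positive loss to justify the factor $\log\frac{b-2}{2}$, and choosing the Chernoff parameter $\sigma \asymp b^3$ so the digit-sum generating function is bounded by $2$, which produces the $\frac{2\log 2}{\log b}$ exponent on $|I|$ and $\lambda^{c_b}$ with $c_b = 4b^3\log\frac{b-2}{2}$. The structural part — product formula, digit expansion, distinctness of digit vectors for distinct $k$, and the Chernoff counting bound — is routine given the tools already developed in Lemmas \ref{lem:L1estimate} and \ref{lem:l-inftyBound}; the arithmetic of optimizing $\sigma$ is where care is needed.
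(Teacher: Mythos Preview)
Your Chernoff/entropy argument is sound in spirit, but there is a genuine gap in obtaining the exponent $|I|^{2\log 2/\log b}$ rather than $2^N$. When you expand $\theta + k/b^N$ in base $b$, all $N$ digits depend on $\theta$; as $k$ ranges over an interval of length $|I|$ it is \emph{not} true that only $\log_b|I|$ of the coordinates vary. For instance, if the base-$b$ expansion of $\theta$ has a long run of the digit $b-1$, adding $1/b^N$ produces a carry that changes every one of those digits, so your claim that ``not all $N$ coordinates vary'' is false for generic $\theta$. Consequently your Chernoff bound over all of $\{0,\dots,b-1\}^N$ really does give $\lambda^{\sigma}2^N$, and the fallback $\min\{|I|,\lambda^\sigma 2^N\}$ is useless when, say, $|I|=b^{N/2}$: it yields $|I|$ itself, which is far larger than $|I|^{2\log 2/\log b}$.

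The paper's proof repairs exactly this point with a differencing trick you do not have. It first uses the pointwise inequality $|\widehat{C}_{b^N}(t)|\le (b-s)^N\exp(-\tfrac{1}{b}\sum_{i=0}^{N-1}\|b^it\|^2)$ (from the consecutive-digit argument in Lemma~\ref{lem:l-inftyBound}), so that the large-value set $T$ lies inside $\{k\in I:\sum_i\|b^i(\theta+k/b^N)\|^2<b\log\lambda\}$. For $k_1,k_2\in T$, Minkowski gives $\sum_i\|b^i(j/b^N)\|^2<4b\log\lambda$ where $j=k_2-k_1$; hence $T-T\subset T_0$, a set of \emph{integers} $j$ with $|j|\le|I|$ satisfying the same type of constraint, and $|T|\le|T_0|$. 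This step simultaneously (i) eliminates $\theta$ entirely and (ii) forces $j$ to have at most $m+1\approx\log_b|I|$ base-$b$ digits. Now the digit-counting argument (which in the paper shows $\|b^{i-N}j\|\ge 1/b$ whenever the relevant digit $a_{N-i-1}\notin\{0,b-1\}$, hence at most $4b^3\log\lambda$ ``bad'' digits) is applied to $m+1$ coordinates rather than $N$, producing $4^{m+1}\lambda^{c_b}\ll |I|^{2\log 2/\log b}\lambda^{c_b}$. Your Chernoff device is a reasonable alternative to the explicit binomial count in the final step, but without the differencing idea you cannot get the right $|I|$-dependence.
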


\begin{proof}
    Since \begin{align*}
        \mathbf{1}\Big(\Big|\widehat{C}_{b^N}\Big(\theta+\frac{k}{b^N}\Big)\Big|>\frac{(b-s)^N}{\lambda}\Big)&\leq \mathbf{1}\Big(\sum_{i=0}^{N-1}\Big\|b^i\Big(\theta+\frac{k}{b^N}\Big)\Big\|^2<b\log \lambda\Big)
    \end{align*} we may bound the sum above by \begin{align*}
        \sum_{\substack{k\in I}}\mathbf{1}\Big(\sum_{i=0}^{N-1}\Big\|b^i\Big(\theta+\frac{k}{b^N}\Big)\Big\|^2<b\log \lambda\Big).
    \end{align*} Set $T:=\{k\in I:\sum_{i=0}^{N-1}\|b^i(\theta+\frac{k}{b^N})\|^2<b\log \lambda\}$. Suppose $k_1,k_2\in T$, then if $j:=k_2-k_1$ we have that $|k_2-k_1|\leq |I|$, and \begin{align*}
        \sum_{i=0}^{N-1}\|b^i(j/b^N)\|^2< 4b\log \lambda
    \end{align*} by Minkowski's inequality. Set $T_0:=\{j\in \mathbb{Z}:|j|\leq |I|,\sum_{i=0}^{N-1}\|b^i(j/b^N)\|^2<4b\log \lambda\}$, then $k_2-k_1\in T_0$. Since $k_1,k_2$ were arbitrary elements of $T$, we then have that $T-T\subset T_0$, and so $|T|\leq |T_0|$.

    We now show that $|T_0|\ll_b |I|^{\frac{2\log 2}{\log b}}\lambda^{c_b}$. Take $j\in T_0$, then we may write $j=\pm \sum_{c=0}^{m}a_cb^c$ with $a_c\in \{0,...,b-1\}$ and $m=\lfloor \frac{\log |I|}{\log b}\rfloor$. Set $a_c=0$ for $m<c<N$. Consider, for some $0\leq i<N$, the quantity $\|b^{i-N}j\|$. We may observe \begin{align*}
        \|b^{i-N}j\|=\Big\|b^{i-N}\sum_{c=0}^{m}a_cb^c\Big\|=\Big\|b^{i-N}\sum_{c=0}^{N-i-1}a_cb^c\Big\|&\geq \|a_{N-i-1}/b\|-\Big\|b^{i-N}\sum_{c=0}^{N-i-2}a_cb^c\Big\| \\ &\geq \|a_{N-i-1}/b\|-1/b
    \end{align*} and so if $a_{N-i-1}\not\in \{0,1,b-1\}$ this is at least $1/b$. Moreover, if $a_{N-i-1}=1$ then we have \begin{align*}
        \Big\|b^{i-N}\sum_{c=0}^{N-i-1}a_cb^c\Big\|=b^{i-N}\sum_{c=0}^{N-i-1}a_cb^c\geq \frac{1}{b}\quad (b\geq 4)
    \end{align*} and so $\|b^{i-N}j\|\geq 1/b$ if $a_{N-i-1}\not\in \{0,b-1\}$. Thus, \begin{align*}
        \sum_{i=0}^{N-1}\|b^i(j/b^N)\|^2\geq b^{-2}\#\{0\leq i<N: a_{i}\not\in \{0,b-1\}\}.
    \end{align*} Since $j\in T_0$ by assumption, we then have that \begin{align*}
        \#\{0\leq i<N:a_i\not\in \{0,b-1\}\}< 4b^3\log \lambda
    \end{align*} and in particular, \begin{align*}
        \#\{0\leq c\leq m:a_c\not\in \{0,b-1\}\}<4b^3\log \lambda.
    \end{align*} The problem of estimating $|T_0|$ is then reduced to that of counting tuples $(a_0,...,a_m)$ with $a_c\in \{0,...,b-1\}$ and $\#\{0\leq c\leq m:a_c\not\in \{0,b-1\}\}\leq 4b^3\log \lambda$. This quantity is bounded above by \begin{align*}
        \sum_{k=0}^{\lfloor 4b^3\log \lambda\rfloor}\binom{m+1}{k}(b-2)^k2^{m+1-k},
    \end{align*} and since $(b-2)^k2^{m+1-k}=2^{m+1}\Big(\frac{b-2}{2}\Big)^{k}\leq 2^{m+1}\Big(\frac{b-2}{2}\Big)^{4b^3\log \lambda}$ and $\sum_{k=0}^{\lfloor 4b^3\log \lambda\rfloor}\binom{m+1}{k}\leq 2^{m+1}$, we may bound \begin{align*}
        |T_0|\ll 4^m \lambda^{4b^3\log(\frac{b-2}{2})}.
    \end{align*} Finally, since $m\leq \frac{\log|I|}{\log b}$ we have that \begin{align*}
        |T_0|\ll_b {|I|}^{\frac{2\log 2}{\log b}}\lambda^{4b^3\log(\frac{b-2}{2})}
    \end{align*} to complete the proof with $c_b=4b^3\log(\frac{b-2}{2})$.
\end{proof}

\begin{proof}[Proof of Proposition \ref{prop:inversion}]

Let $J\subset \mathbb{Z}-b^Nx$ be an interval of cardinality $b^N$ containing $[-\log^B(b^N),\log^B(b^N)]$ (for concreteness, take $J=[-b^N/2,b^N/2)\cap (\mathbb{Z}-b^Nx)$) and consider first the completed sum \begin{align*}
    \sum_{\eta\in J}\widehat{C}_{b^N}\Big(\theta+x+\frac{\eta}{b^N}\Big)\sum_{k=0}^{b^N-1}e\Big(-\frac{k\eta}{b^N}\Big).
\end{align*} By expanding out the Fourier transform and interchanging summations, this is precisely \begin{align*}
    \sum_{n<b^N}\mathbf{1}_\mathcal{C}(n)e(n(\theta+x))\sum_{k=0}^{b^N-1}\sum_{\eta\in J}e\Big(\frac{\eta (n-k)}{b^N}\Big)=b^N\widehat{C}_{b^N}(\theta+x).
\end{align*} Thus, we have that \begin{align*}
    b^N\widehat{C}_{b^N}(\theta+x)-\sum_{\substack{|\eta|<\log^B(b^N)\\ b^Nx+\eta\in \mathbb{Z}}}\widehat{C}_{b^N}\Big(\theta+x+\frac{\eta}{b^N}\Big)\sum_{k=0}^{b^N-1}e\Big(-\frac{k\eta}{b^N}\Big)\\ =\sum_{\substack{\eta\in J\\ |\eta|\geq \log^B(b^N)}}\widehat{C}_{b^N}\Big(\theta+x+\frac{\eta}{b^N}\Big)\sum_{k=0}^{b^N-1}e\Big(-\frac{k\eta}{b^N}\Big)
\end{align*} and so \begin{align*}
    E:=\Big|b^N\widehat{C}_{b^N}(\theta+x)-\sum_{\substack{|\eta|<\log^B(b^N)\\ b^Nx+\eta\in \mathbb{Z}}}\widehat{C}_{b^N}\Big(\theta+x+\frac{\eta}{b^N}\Big)\sum_{k=0}^{b^N-1}e\Big(-\frac{k\eta}{b^N}\Big)\Big|\\ \leq \sum_{\substack{\eta\in J\\ |\eta|\geq \log^B(b^N)}}\Big|\widehat{C}_{b^N}\Big(\theta+x+\frac{\eta}{b^N}\Big)\Big|\cdot \Big|\sum_{k=0}^{b^N-1}e\Big(-\frac{k\eta}{b^N}\Big)\Big|.
\end{align*} Using that $|\sum_{k=0}^{b^N-1}e(-\frac{k\eta}{b^N})|\ll \|\eta/b^N\|^{-1}$ we then have that this error $E$ satisfies \begin{align*}
    E\ll \sum_{\substack{\eta\in J\\ |\eta|\geq \log^B(b^N)}}\Big|\widehat{C}_{b^N}\Big(\theta+x+\frac{\eta}{b^N}\Big)\Big|\cdot \|\eta/b^N\|^{-1}.
\end{align*} For a parameter $\lambda\geq 1$ to be determined later, we will partition the points $\{\eta\in J:|\eta|\geq \log^B(b^N)\}$ into two categories: where $|\widehat{C}_{b^N}(\theta+x+\frac{\eta}{b^N})|\geq \frac{(b-s)^N}{\lambda}$, and where $|\widehat{C}_{b^N}(\theta+x+\frac{\eta}{b^N})|<\frac{(b-s)^N}{\lambda}$. This gives that \begin{align*}
    E\ll \Sigma_1+\Sigma_2,
\end{align*} where \begin{align*}
    \Sigma_1&:=(b-s)^N\sum_{\substack{\eta\in J\\ |\eta|\geq \log^B(b^N)}}\mathbf{1}\Big(\Big|\widehat{C}_{b^N}\Big(\theta+x+\frac{\eta}{b^N}\Big)\Big|\geq \frac{(b-s)^N}{\lambda}\Big)\cdot \|\eta/b^N\|^{-1} \\ \Sigma_2&:=\frac{(b-s)^N}{\lambda}\sum_{\substack{\eta\in J\\ |\eta|\geq \log^B(b^N)}}\|\eta/b^N\|^{-1}.
\end{align*} It is easy to observe that $\sum_{\substack{\eta\in J\\ |\eta|\geq \log^B(b^N)}}\|\eta/b^N\|^{-1}\ll b^N\log(b^N)$, and so \begin{align*}\Sigma_2\ll \frac{b^N(b-s)^N\log(b^N)}{\lambda}.\end{align*}

To bound $\Sigma_1$, we will use partial summation. First consider where $\log^B(b^N)\leq \eta\leq b^N/2$, and set $f(\eta):=\mathbf{1}(\eta\geq \log^B(b^N))\cdot \mathbf{1}\Big(\Big|\widehat{C}_{b^N}\Big(\theta+x+\frac{\eta}{b^N}\Big)\Big|\geq \frac{(b-s)^N}{\lambda}\Big)$. Here, $\|\eta/b^N\|=\eta/b^N$, and so \begin{align*}
    \sum_{\substack{\eta\in J\\ \log^B(b^N)\leq \eta\leq b^N/2}}f(\eta) \|\eta/b^N\|^{-1} =b^N\sum_{\substack{\eta\in J\\ \log^B(b^N)\leq \eta\leq b^N/2}}f(\eta) \eta^{-1}.
\end{align*} By partial summation, we may bound this above by \begin{align*}
    b^N\Big(b^{-N}\sum_{\substack{\eta\in J\\ \log^B(b^N)\leq \eta\leq b^N/2}}f(\eta)+\int_{\log^B(b^N)}^{b^N/2}\frac{1}{t^2}\sum_{\substack{\eta\in J\\ \log^B(b^N)\leq \eta\leq t}}f(\eta) dt\Big).
\end{align*} Applying Lemma \ref{lem:largeFourierPoints} gives that the first sum is bounded above by $\lambda^{c_b}b^{\frac{2\log 2}{\log b}N}$, and that the sum inside the integral is bounded above by $\lambda^{c_b}t^{\frac{2\log 2}{\log b}}$, and so the expression is bounded above by \begin{align*}
    \lambda^{c_b}b^{\frac{2\log 2}{\log b}N}+\lambda^{c_b}b^N\int_{\log^B(b^N)}^{b^N/2}t^{-2+\frac{2\log 2}{\log b}}dt\ll \lambda^{c_b}b^N\log^{-(1-\frac{2\log 2}{\log b})B}.
\end{align*} The case where $-b^N/2\leq \eta\leq -\log^B(b^N)$ follows similarly, and so we may then deduce that \begin{align*}
    \Sigma_1\ll b^N(b-s)^N\lambda^{c_b}\log^{-(1-\frac{2\log 2}{\log b})B}.
\end{align*}

Thus, \begin{align*}
    \Sigma_1+\Sigma_2\ll_b b^N(b-s)^N\Big(\lambda^{c_b}\log^{-(1-\frac{2\log 2}{\log b})B}(b^N)+\frac{\log(b^N)}{\lambda}\Big).
\end{align*} 

Choosing $\lambda=\log^{A+1}(b^N)$, we see that for sufficiently large $B$ the result holds.

\end{proof}

With Proposition \ref{prop:inversion}, we can now simplify the exponential sums that will arise from Dirichlet's approximation theorem later.

\begin{lemma}\label{lem:arcReduction}
    Take $\theta\in \mathbb{T}$. Then, for $A>0$ and sufficiently large $B$ in terms of $A$, \begin{align*}
        \sum_{d<\log^A(b^N)}\sum_{\ell\in (\mathbb{Z}/d\mathbb{Z})^*}\sum_{\substack{|\eta|<\log^B(b^N)\\ b^N\ell/d+\eta\in \mathbb{Z}}}\widehat{C}_{b^N}\Big(\theta+\frac{\ell}{d}+\frac{\eta}{b^N}\Big)\widehat{\Lambda}_{b^N}\Big(-\frac{\ell}{d}-\frac{\eta}{b^N}\Big) \\ =b^N\sum_{d<\log^A(b^N)}\frac{\mu(d)}{\phi(d)}\sum_{\ell\in (\mathbb{Z}/d\mathbb{Z})^*}\widehat{C}_{b^N}\Big(\theta+\frac{\ell}{d}\Big)+O_A\Big(\frac{b^N(b-s)^N}{\log^A(b^N)}\Big).
    \end{align*}
\end{lemma}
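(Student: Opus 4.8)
The plan is to reduce the statement to Proposition \ref{prop:inversion} by first replacing $\widehat{\Lambda}_{b^N}$ by its expected main term on each arithmetic progression picked out by the denominator $d$. Fix $d<\log^A(b^N)$ and $\ell\in(\mathbb{Z}/d\mathbb{Z})^*$. I would expand
\[
\widehat{\Lambda}_{b^N}\Big(-\frac{\ell}{d}-\frac{\eta}{b^N}\Big)=\sum_{a\,(\text{mod }d)}e\Big(-\frac{a\ell}{d}\Big)\sum_{\substack{k<b^N\\ k\equiv a\,(\text{mod }d)}}\Lambda(k)e\Big(-\frac{k\eta}{b^N}\Big),
\]
observe that the residues $a$ with $(a,d)>1$ only see prime powers $p^j$ with $p\mid d$, hence contribute $O(\omega(d)\log(b^N))$ in total, and for $(a,d)=1$ invoke the Siegel--Walfisz theorem, $\sum_{k\le x,\,k\equiv a\,(\text{mod }d)}\Lambda(k)=x/\phi(d)+O_{A,C}(x/\log^{C}(b^N))$ uniformly for $d\le\log^A(b^N)$, followed by partial summation against $e(-k\eta/b^N)$, whose total variation on $[0,b^N]$ is $O(|\eta|)=O(\log^B(b^N))$. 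This yields
\[
\sum_{\substack{k<b^N\\ k\equiv a\,(\text{mod }d)}}\Lambda(k)e\Big(-\frac{k\eta}{b^N}\Big)=\frac{1}{\phi(d)}\sum_{k=0}^{b^N-1}e\Big(-\frac{k\eta}{b^N}\Big)+O_{A,C}\Big(\frac{b^N\log^B(b^N)}{\log^{C}(b^N)}\Big),
\]
and summing over $a$ turns $\sum_{(a,d)=1}e(-a\ell/d)$ into the Ramanujan sum $c_d(\ell)=\mu(d)$, using $(\ell,d)=1$.

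Inserting this decomposition into the left-hand side and bounding $|\widehat{C}_{b^N}|\le(b-s)^N$ trivially, the Siegel--Walfisz error terms, summed over all $d<\log^A(b^N)$, all $\ell\in(\mathbb{Z}/d\mathbb{Z})^*$, all admissible $\eta$ (of which there are $O(\log^B(b^N))$) and all $a\,(\text{mod }d)$, contribute $O_{A,C}\bigl((b-s)^N b^N\log^{3A+2B}(b^N)/\log^{C}(b^N)\bigr)$, plus a polylogarithmic-in-$b^N$ multiple of $(b-s)^N$ coming from the $(a,d)>1$ classes; taking the Siegel--Walfisz exponent $C$ sufficiently large in terms of $A$ and $B$ makes all of this $\ll_A(b-s)^N b^N/\log^A(b^N)$. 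What remains is the main term
\[
\sum_{d<\log^A(b^N)}\frac{\mu(d)}{\phi(d)}\sum_{\ell\in(\mathbb{Z}/d\mathbb{Z})^*}\ \sum_{\substack{|\eta|<\log^B(b^N)\\ b^N\ell/d+\eta\in\mathbb{Z}}}\widehat{C}_{b^N}\Big(\theta+\frac{\ell}{d}+\frac{\eta}{b^N}\Big)\sum_{k=0}^{b^N-1}e\Big(-\frac{k\eta}{b^N}\Big).
\]

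For each fixed $d$ and $\ell$, the innermost double sum over $\eta$ and $k$ is exactly the quantity controlled by Proposition \ref{prop:inversion} with $x=\ell/d$ (note that $0\le\ell<d$ gives $\ell/d\in\mathbb{T}$, and the constraint $b^N\ell/d+\eta\in\mathbb{Z}$ matches), so it equals $b^N\widehat{C}_{b^N}(\theta+\ell/d)+O\bigl(b^N(b-s)^N/\log^{A'}(b^N)\bigr)$ provided $B$ is large in terms of $A'$. Choosing $A'=2A$ and summing the error over $d<\log^A(b^N)$ and $\ell\in(\mathbb{Z}/d\mathbb{Z})^*$ --- where the $\phi(d)$ terms of the $\ell$-sum cancel the $1/\phi(d)$ weight, leaving $\sum_{d<\log^A(b^N)}1\ll\log^A(b^N)$ --- gives an error $O\bigl(\log^A(b^N)\cdot b^N(b-s)^N/\log^{2A}(b^N)\bigr)=O\bigl(b^N(b-s)^N/\log^A(b^N)\bigr)$, and the surviving main term is exactly $b^N\sum_{d<\log^A(b^N)}\frac{\mu(d)}{\phi(d)}\sum_{\ell\in(\mathbb{Z}/d\mathbb{Z})^*}\widehat{C}_{b^N}(\theta+\ell/d)$, as claimed. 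The main difficulty is not any individual estimate but the order in which the parameters must be calibrated: one fixes $A'=2A$ first, which forces $B$ to be large in terms of $A$ through Proposition \ref{prop:inversion}, and only then picks the Siegel--Walfisz exponent $C=C(A,B)$ large enough that every error stream is simultaneously $\ll(b-s)^N b^N/\log^A(b^N)$; the ineffectivity of Siegel's theorem is harmless since the conclusion is only qualitative in its implied constant.
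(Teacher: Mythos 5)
Your proof is correct and takes the same route as the paper: the paper cites exactly the estimate $\widehat{\Lambda}_{b^N}(-\ell/d-\eta/b^N)=\frac{\mu(d)}{\phi(d)}\sum_{k<b^N}e(-\eta k/b^N)+O_C(b^N/\log^C(b^N))$ as following from Siegel--Walfisz and partial summation, then applies Proposition~\ref{prop:inversion}. You have simply spelled out the derivation of that estimate (residue decomposition, treatment of the $(a,d)>1$ classes, partial summation against $e(-k\eta/b^N)$, Ramanujan sum) and the ordering of the parameters $A'$, $B$, $C$, all of which is implicit in the paper's one-line proof.
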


\begin{proof}
    We use Proposition \ref{prop:inversion} alongside the estimate \begin{align}
        \widehat{\Lambda}_{b^N}\Big(-\frac{\ell}{d}-\frac{\eta}{b^N}\Big)=\frac{\mu(d)}{\phi(d)}\sum_{k=0}^{b^N-1}e\Big(-\frac{\eta k}{b^N}\Big)+O_C\Big(\frac{b^N}{\log^C(b^N)}\Big),
    \end{align} which follows from the Siegel-Walfisz theorem and partial summation.
\end{proof}

We now define our major arcs. For $A',B>0$, define \begin{align*}
    \mathfrak{M}:=\Big\{(d,\ell,\eta):1\leq d\leq \log^{A'}(b^N),(\ell,d)=1,|\eta|\leq \log^B(b^N), b^N\ell/d+\eta\in \mathbb{Z}\Big\}.
\end{align*} Each triple $(d,\ell,\eta)$ represents the Fourier gridpoint $\frac{a}{b^N}=\frac{\ell}{d}+\frac{\eta}{b^N}\ (\text{mod }1)$.\\

\begin{lemma}[Uniqueness of a major representation] For sufficiently large $N$ in terms of $A'$ and $B$, a Fourier gridpoint $\frac{a}{b^N}$ has at most one representation by a triple in $\mathfrak{M}$.
\end{lemma}

\begin{proof}
    Suppose $\frac{a}{b^N}=\frac{\ell}{d}+\frac{\eta}{b^N}=\frac{\ell'}{d'}+\frac{\eta'}{b^N}$, where $(d,\ell,\eta),(d',\ell',\eta')\in \mathfrak{M}$. First consider the case where $(d,\ell)=(d',\ell')$. Then, $\frac{\eta}{b^N}= \frac{\eta'}{b^N}\ (\text{mod }1)$, and so $\eta\in \eta'+b^N\mathbb{Z}$. Since $|\eta|\leq \log^B(b^N)$, this provides for sufficiently large $N$ in terms of $B$ that $\eta=\eta'$. Now consider where $(d,\ell)\neq (d',\ell')$. Then, since $d,d'\leq \log^{A'}(b^N)$, one observes that $|\frac{\ell}{d}-\frac{\ell'}{d'}|\geq \log^{-2A'}(b^N)$, and so \begin{align*}
        \log^{-2A'}(b^N)\leq \frac{|\eta-\eta'|}{b^N}\leq \frac{2\log^B(b^N)}{b^N}.
    \end{align*} This is a contradiction for sufficiently large $N$ in terms of $B$ and $A'$.
\end{proof}

\begin{proposition}[Low-dimensional approximant]\label{prop:ExponentialSumReduction} Take $\theta\in \mathbb{T}$. Then, for any $A>0$, one has that \begin{align*}
    \sum_{n<b^N}\mathbf{1}_\mathcal{C}(n)\Lambda(n)e(n\theta)=\sum_{d<\log^{A'}(b^N)}\frac{\mu(d)}{\phi(d)}\sum_{\ell \in (\mathbb{Z}/d\mathbb{Z})^*}\widehat{C}_{b^N}\Big(\theta+\frac{\ell}{d}\Big)+O_A\Big(\frac{(b-s)^N}{\log^A(b^N)}\Big)
\end{align*} for sufficiently large $A'>0$ (that depends on $A$).\end{proposition}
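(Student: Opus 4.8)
The strategy is a standard Hardy--Littlewood circle method dissection, but carried out entirely in the ``$b$-adic'' frequency domain where the Riesz-product structure of $\widehat{C}_{b^N}$ can be exploited. The starting point is to write $\sum_{n<b^N}\mathbf{1}_\mathcal{C}(n)\Lambda(n)e(n\theta)$ by Parseval/Fourier inversion on $\mathbb{Z}/b^N\mathbb{Z}$ (or directly by expanding $\mathbf{1}_\mathcal{C}$ as a Fourier sum over frequencies $a/b^N$), so that the sum becomes $\frac{1}{b^N}\sum_{a\bmod b^N}\widehat{C}_{b^N}(\theta + a/b^N)\,\widehat{\Lambda}_{b^N}(-a/b^N)$ up to harmless modifications. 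Each frequency $a/b^N$ is then approximated, via Dirichlet's approximation theorem, by $\ell/d + \eta/b^N$ with $d$ in some range and $|\eta|$ controlled; this partitions the frequency set into ``major arcs'' (where $d < \log^{A'}(b^N)$ and $|\eta| < \log^B(b^N)$ for a large $B$) and a ``minor arc'' remainder.

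The minor-arc contribution is handled by Lemma \ref{lem:minorArcs}: one dyadically decomposes the ranges of $d$ (into $d\sim D$) and of $\eta$ (into $|\eta|\sim B_0$ or $|\eta|\ll 1$), and applies the two bounds there. One must choose the auxiliary parameter $D_0$ appearing in that lemma, together with the cutoffs, so that every piece is $\ll_b N^4 b^N (b-s)^N \cdot (\text{small})$, and then divide by $b^N$ to recover a bound of the shape $(b-s)^N/\log^A(b^N)$. This is where the hypothesis $\alpha < 1/5$ is essential: the exponents $\frac15-\alpha$ appearing in Lemma \ref{lem:minorArcs} must be strictly positive for the power of $D$ (or of $D^2B$) to yield genuine saving, and one picks $A'$ large enough (depending on $A$ and on $\alpha$) that $D^{-(1/5-\alpha)} \ll \log^{-A}(b^N)$ once $D \geq \log^{A'}(b^N)$, and similarly picks $D_0$ a suitable power of $\log(b^N)$ so that $b^{\alpha N}/D_0^{1/2}$ and $D_0^{2\alpha+1/2}/b^{N/2}$ are both negligible. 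The factor $N^4 = (\log b^N)^{O(1)}$ is absorbed by taking $A'$ slightly larger.

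For the major arcs, the point is that over $d < \log^{A'}(b^N)$ and $|\eta| < \log^B(b^N)$ one may invoke Lemma \ref{lem:arcReduction}: the sum over these frequencies equals $b^N\sum_{d<\log^{A'}(b^N)}\frac{\mu(d)}{\phi(d)}\sum_{\ell\in(\mathbb{Z}/d\mathbb{Z})^*}\widehat{C}_{b^N}(\theta+\ell/d) + O_A(b^N(b-s)^N/\log^A(b^N))$, which after dividing by $b^N$ is exactly the claimed main term plus an acceptable error. (One should double-check that the normalization in Lemma \ref{lem:arcReduction} already includes the $\sum_k e(-\eta k/b^N)$ factor coming from Fourier inversion, so that no stray factor of $b^N$ is lost; this is the reason the inversion identity is packaged as Proposition \ref{prop:inversion}.) Combining the major-arc main term with the minor-arc error, and choosing $B$ large in terms of $A$ (as permitted by Proposition \ref{prop:inversion}) and $A'$ large in terms of $A$ and $\alpha$, gives the proposition.

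The main obstacle is bookkeeping in the minor-arc step: one has to verify that \emph{simultaneously} the dyadic sum over $D$, the dyadic sum over $B_0$, and the choices of $D_0$ and of the major-arc cutoffs all fit together so that no range is left uncovered and every error term beats $\log^{-A}(b^N)$. In particular one must confirm that the hypotheses $1\ll B_0 \ll b^N/(D_0 D)$, $1 \ll D \ll D_0 \ll b^{N/2}$, and $B_0 < b^N/(10 D^2)$ needed to apply Lemmas \ref{lem:minorArcs} and \ref{lem:hybridEstimate} are respected throughout the relevant ranges of $D$, and that summing the geometric-type series in $D$ and $B_0$ only costs another power of $N$. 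Everything else is routine once $\alpha<1/5$ is in hand.
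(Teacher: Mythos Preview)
Your plan matches the paper's proof almost exactly: Fourier inversion on $\mathbb{Z}/b^N\mathbb{Z}$, Dirichlet approximation of each $a/b^N$, minor arcs via Lemma \ref{lem:minorArcs} after dyadic decomposition, major arcs via Lemma \ref{lem:arcReduction}. The one concrete mistake is your choice of $D_0$. You say to ``pick $D_0$ a suitable power of $\log(b^N)$ so that $b^{\alpha N}/D_0^{1/2}$ and $D_0^{2\alpha+1/2}/b^{N/2}$ are both negligible,'' but no power of $\log(b^N)$ can do this: since $\alpha>0$, the quantity $b^{\alpha N}/D_0^{1/2}$ grows like a genuine power of $b^N$ unless $D_0$ itself is a power of $b^N$. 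The paper takes $D_0=b^{N/2}$, which makes both terms of size $b^{-(1/4-\alpha)N}$ (using $\alpha<1/5<1/4$), far smaller than any inverse power of $\log(b^N)$. With that correction your outline goes through; the paper also records the specific choice $A'>4+\tfrac{2A}{1/5-\alpha}$ to absorb the $N^4$ factors.
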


\begin{proof}
   By Fourier inversion we may write \begin{align*}
       \sum_{n<b^N}\mathbf{1}_\mathcal{C}(n)\Lambda(n)e(n\theta)=b^{-N}\sum_{0\leq a<b^N}\widehat{C}_{b^N}\Big(\theta+\frac{a}{b^N}\Big)\widehat{\Lambda}_{b^N}\Big(-\frac{a}{b^N}\Big).
   \end{align*} We separate out Fourier gridpoints $\frac{a}{b^N}$ that can be represented by a triple in $\mathfrak{M}$. For every remaining grid point $\frac{a}{b^N}$, and setting $D_0:=\lfloor b^{N/2}/10\rfloor$, Dirichlet's theorem gives a reduced fraction $\frac{\ell}{d}$ with $1\leq d\leq D_0$, such that $\|\frac{a}{b^N}-\frac{\ell}{d}\|\leq \frac{1}{dD_0}$. Choose one such approximant by a fixed deterministic rule, e.g. by first minimizing $d$ and then choosing the least admissible numerator. Define \begin{align*}
       \eta:=a-\frac{b^N\ell}{d},\quad |\eta|\leq \frac{b^N}{dD_0}.
   \end{align*} Because $\frac{a}{b^N}$ is not represented by a triple in $\mathfrak{M}$, we must have that either $d>\log^{A'}(b^N)$ or $|\eta|>\log^B(b^N)$.
   
   This gives a partition of the Fourier gridpoints $\frac{a}{b^N}$, and so we may write \begin{align*}
       \sum_{n<b^N}\mathbf{1}_\mathcal{C}(n)\Lambda(n)e(n\theta)=b^{-N}\sum_{(d,\ell,\eta)\in \mathfrak{M}}\widehat{C}_{b^N}\Big(\theta+\frac{\ell}{d}+\frac{\eta}{b^N}\Big)\widehat{\Lambda}_{b^N}\Big(-\frac{\ell}{d}-\frac{\eta}{b^N}\Big)+E,
   \end{align*} where $E$ is the sum over the minor triples (i.e. nonmajor triples). To bound $E$ in absolute value, one may first apply the triangle inequality to induce positive summands, and then majorize the sum over minor triples by the sum over the larger set of triples \begin{align*}
       E_1\cup E_2&:=\Big\{(d,\ell,\eta):\log^{A'}(b^N)<d\leq D_0,(\ell,d)=1,|\eta|\leq \frac{b^N}{dD_0},b^N\ell/d+\eta\in \mathbb{Z}\Big\}\\ &\cup \Big\{(d,\ell,\eta):d\leq \log^{A'}(b^N),(\ell,d)=1,\log^B(b^N)<|\eta|\leq \frac{b^N}{dD_0},b^N\ell/d+\eta\in \mathbb{Z}\Big\},
   \end{align*} so that \begin{align*}
       |E|&\leq b^{-N}\sum_{(d,\ell,\eta)\in E_1}\Big|\widehat{C}_{b^N}\Big(\theta+\frac{\ell}{d}+\frac{\eta}{b^N}\Big)\widehat{\Lambda}_{b^N}\Big(-\frac{\ell}{d}-\frac{\eta}{b^N}\Big)\Big|\\ &+b^{-N}\sum_{(d,\ell,\eta)\in E_2}\Big|\widehat{C}_{b^N}\Big(\theta+\frac{\ell}{d}+\frac{\eta}{b^N}\Big)\widehat{\Lambda}_{b^N}\Big(-\frac{\ell}{d}-\frac{\eta}{b^N}\Big)\Big|.
   \end{align*} Let $A'>0$ be determined later. Choose $B$ sufficiently large in terms of $A$ so that we may apply Proposition \ref{prop:inversion}. For the purposes of Lemma \ref{lem:minorArcs}, we view $B$ as comparable to $|\eta|$, and $D$ as comparable to $d$. We first consider the sum over $E_1$. By partitioning the range of $d$ and $\eta$ into dyadic intervals, applying the first part of Lemma \ref{lem:minorArcs} to the regime where $|\eta|\gg 1$, and applying the second part to where $|\eta|\ll 1$, we obtain that the total contribution of this first sum is \begin{align*}
       \ll_{A'} N^6b^N(b-s)^N\Big(\frac{1}{\log^{(\frac{1}{5}-\alpha)A'}(b^N)}+\frac{b^{\alpha N}}{D_0^{1/2}}\Big)+N^6b^N(b-s)^N\frac{D_0^{2\alpha+\frac{1}{2}}}{b^{N/2}}
   \end{align*} (here we use that $\alpha<\frac{1}{5}$, to ensure that the dyadic sums are bounded). Similarly, applying the first part of Lemma \ref{lem:minorArcs} to a dyadic decomposition of the sum over $E_2$, the contribution here is \begin{align*}
       \ll_{B}N^6b^N(b-s)^N\Big(\frac{1}{\log^{(\frac{1}{5}-\alpha)B}(b^N)}+\frac{b^{\alpha N}}{D_0^{1/2}}\Big).
   \end{align*} We then obtain with $\alpha<\frac{1}{5}$ and $D_0\ll b^{N/2}$ that $E$ may be bounded as \begin{align*}
       |E|\ll_{A',B} (b-s)^N\cdot (N^{6-(\frac{1}{5}-\alpha)A'}+N^{6-(\frac{1}{5}-\alpha)B}).
   \end{align*} Choosing $A'$ and $B$ sufficiently large in terms of $A$ to ensure that $|E|\ll_A (b-s)^N \log^{-A}(b^N)$, we are left with \begin{align*}
       \sum_{n<b^N}\mathbf{1}_\mathcal{C}(n)\Lambda(n)e(n\theta)=b^{-N}\sum_{(d,\ell,\eta)\in \mathfrak{M}}\widehat{C}_{b^N}\Big(\theta+\frac{\ell}{d}+\frac{\eta}{b^N}\Big)\widehat{\Lambda}_{b^N}\Big(-\frac{\ell}{d}-\frac{\eta}{b^N}\Big).
   \end{align*} We may then apply Lemma \ref{lem:arcReduction} to simplify the main term here, and so \begin{align*}
       \sum_{n<b^N}\mathbf{1}_\mathcal{C}(n)\Lambda(n)e(n\theta)=\sum_{d\leq \log^{A'}(b^N)}\frac{\mu(d)}{\phi(d)}\sum_{\ell\in (\mathbb{Z}/d\mathbb{Z})^*}\widehat{C}_{b^N}\Big(\theta+\frac{\ell}{d}\Big)+O_A\Big(\frac{(b-s)^N}{\log^A(b^N)}\Big).
   \end{align*}

\end{proof}

\section{Proof of the Main Theorems}

\subsection{Proof of Theorem \ref{thm:Dirichlet}}

In this section, we prove the following main result, which is Theorem \ref{thm:Dirichlet} restated.

\begin{theorem}\label{thm:Dirichlet(2)}
    Let $m\geq 1$ and $t\in \mathbb{Z}/m\mathbb{Z}$. Then, for any $K>0$, \begin{align*}
        \sum_{\substack{n<b^N\\ n\equiv t\ (\text{mod }m)}}\mathbf{1}_\mathcal{C}(n)\Lambda(n)=\kappa_{m,t}(b-s)^N+O_K\Big(\frac{(b-s)^N}{\log^K(b^N)}\Big)
    \end{align*} where \begin{align*}
        \kappa_{m,t}:=\frac{b}{\phi(bv)}\mathbf{1}((t,v)=1)\cdot (b-s)^{-L}\sum_{\substack{n<b^L\\ n\equiv t\ (\text{mod }u)}}\mathbf{1}_\mathcal{C}(n)\mathbf{1}((n,b)=1).
    \end{align*} where we write $m=uv$, such that $(v,b)=1$ and $p|u\implies p|b$. $L$ is any sufficiently large number such that $u|b^{L}$ (the normalized sum is independent of sufficiently large $L$).

\end{theorem}

To prove the theorem, we will need an auxiliary lemma.

\begin{lemma} Fix $m\in \mathbb{N}$, and write $m=uv$ with $p|u\implies p|b$ and $(v,b)=1$. Take $h\in \mathbb{Z}$ such that $hb\equiv 1\ (\text{mod }v)$. Suppose we are given $d|bv$ and $\ell \in (\mathbb{Z}/d\mathbb{Z})^*$, and that $b^k(a/m+\ell/d)\in \mathbb{Z}$ for some $k\in \mathbb{N}$. Then, the following are true: \begin{enumerate}
    \item[(i)] $a\equiv -(\frac{bm}{d})\ell h\ (\text{mod }v)$
    \item[(ii)] There exists some $L=L(m)$ depending only on $m$ such that $b^{L}(a/m+\ell/d)\in \mathbb{Z}$. We may take $L$ to be any positive integer sufficiently large so that $u|b^{L}$.
\end{enumerate} 

\end{lemma}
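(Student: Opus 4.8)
The plan is to prove both (i) and (ii) by separating the prime factorization of $q = uv$ into its $b$-smooth part $u$ and its $b$-coprime part $v$, and analyzing the hypothesis $b^k(a/q + \ell/d) \in \mathbb{Z}$ modulo these two coprime components separately via the Chinese Remainder Theorem. First I would rewrite the hypothesis as $b^k(ad + \ell q) \equiv 0 \pmod{qd}$, or more usefully, since $d \mid bv$, clear denominators to get the divisibility $qd \mid b^k(ad + q\ell)$. The key structural observation is that every prime dividing $u$ divides $b$, so $u \mid b^L$ for $L$ large (this is exactly the choice of $L$ in (ii)); meanwhile $v$ is coprime to $b$, so $b$ is invertible modulo $v$ and multiplication by $b^k$ does not affect divisibility statements mod $v$.

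For part (i), I would look at the hypothesis modulo $v$. Write $d = d_u d_v$ with $d_u \mid b$ (in fact $d_u$ is $b$-smooth) and $d_v \mid v$, using $d \mid bv$ and $(v,b)=1$. Reducing $qd \mid b^k(ad + q\ell)$ to information modulo the $v$-part and using that $b^k$ is a unit mod $v$, I would extract the congruence $a d \equiv -q\ell \pmod{v}$ after removing the invertible factor $b^k$ — more carefully, I need $v \mid ad + q\ell$ up to the $b$-power, which since $(v,b)=1$ is equivalent to $v \mid ad + q\ell$ directly once one tracks that $d$'s contribution to divisibility by $v$ is only through $d_v$. Then multiplying by $h$ (the inverse of $b$ mod $v$) and by $bq/d$ appropriately, and using $hb \equiv 1 \pmod v$, should rearrange this into $a \equiv -(bq/d)\ell h \pmod v$; I'd need to check that $bq/d$ is an integer, which follows since $d \mid bv \mid bq$, and verify the arithmetic of the inversion carefully.

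For part (ii), the point is that the only obstruction to $b^L(a/q + \ell/d) \in \mathbb{Z}$ for large $L$ is the $v$-part of the denominator, since powers of $b$ absorb all $b$-smooth denominators. Write $a/q + \ell/d = (ad + q\ell)/(qd)$; the denominator $qd$ divides $uv \cdot bv = b u v^2$ up to constants, and the $b$-smooth part $u$ (and any $b$-smooth part of $d$) is killed by $b^L$ once $u \mid b^L$ (note $d \mid bv$ contributes only a bounded $b$-smooth factor dividing $b$, handled by taking $L \geq 1$ as well). So it remains to show the $v$-part of the denominator already cancels — i.e. that $v^{\text{something}} \mid ad + q\ell$ — which is precisely what part (i) gives us modulo $v$ (with a bit more bookkeeping to handle $v$ versus higher powers of $v$ coming from $d_v$ and $q$); alternatively, observe that the hypothesis $b^k(a/q+\ell/d)\in\mathbb{Z}$ already forces the $v$-part to cancel since $b^k$ is coprime to $v$, and then $b^L$ for $L \geq k$ with $u \mid b^L$ handles the rest.

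The main obstacle I anticipate is the careful bookkeeping of how $d$ splits across the coprime factors $u$ (or rather $b$) and $v$, and making sure the expression $(bq/d)\ell h$ in (i) is interpreted correctly as an element of $\mathbb{Z}/v\mathbb{Z}$ — in particular confirming $d \mid bq$ so the fraction is an integer, and that the various inversions (of $b$ mod $v$, and implicitly of $d_v$) compose to give exactly the stated formula rather than something off by a unit. The statement (ii) that $L$ may be taken to be any integer with $u \mid b^L$ (independent of $a$, $\ell$, $d$, $k$) is the substantive uniformity claim, and I'd want to make sure the argument genuinely produces an $L$ depending only on $q$, which it does because the $b$-smooth part of $qd$ is bounded in terms of $q$ alone (as $d \mid bv$ forces the $b$-smooth part of $d$ to divide $b$).
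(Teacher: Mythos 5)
Your plan is a direct congruence-chasing argument, which is a genuinely different route from the paper's. The paper instead proves (i) in two clean steps: first it shows that any two residues $a_1,a_2$ satisfying the hypothesis must agree modulo $v$ (from $dq \mid b^{k}d(a_2-a_1)$ and $(v,b)=1$ one immediately gets $v\mid a_2-a_1$), and then it simply \emph{exhibits} the candidate $a_0:=-(bq/d)\ell h$ and verifies by a one-line computation that $\tfrac{a_0}{q}+\tfrac{\ell}{d}=\tfrac{(1-bh)\ell}{d}$ has $b$ times it integral, so $a_0$ is itself a solution with $k=1$; uniqueness mod $v$ then forces $a\equiv a_0\pmod v$. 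This sidesteps entirely the $d=u_d v_d$ bookkeeping you are worried about, and (ii) then falls out by writing $a=a_0+cv$, so that $b^L(a/q+\ell/d)=b^L(a_0/q+\ell/d)+b^L c/u$, both pieces integral once $u\mid b^L$ (and $L\geq 1$). Your route buys a more ``hands-on'' derivation but at the cost of exactly the delicate tracking you flag.

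Two concrete issues in your sketch, both repairable. First, in (i) the intermediate congruence $ad\equiv -q\ell\pmod v$ that you propose to extract is actually vacuous: since $v\mid q$ it just says $ad\equiv 0\pmod v$, and multiplying by $h$ and $bq/d$ gives $0\equiv 0$. What the hypothesis really gives, after pulling out the part of $qd$ coprime to $b$, is the stronger divisibility $vv_d\mid ad+q\ell$; dividing by $v_d$ yields $au_d+u(v/v_d)\ell\equiv 0\pmod v$, and only now can you invert $u_d$ modulo $v$ and substitute $u_d^{-1}\equiv (b/u_d)h\pmod v$ to land on the stated formula. So the $v_d$ you wave at is not optional bookkeeping --- dropping it loses precisely the information you need. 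Second, in (ii) the clause ``$L\geq k$'' should be struck: if $L$ had to dominate $k$ it would not depend only on $q$, which is the whole point of the statement. The correct logic is the one you state first (the $v$-part of the denominator is already cancelled by hypothesis because $b^k$ is a unit mod $v$, so what remains is $b$-smooth and killed by any $L$ with $ub\mid b^{L}$, hence $L$ is a function of $q$ alone); the ``alternatively, $L\geq k$'' escape hatch contradicts the uniformity you yourself identify as the substantive claim.
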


\begin{proof}
    We first show (i). Suppose that $a_1,a_2\in \mathbb{Z}/m\mathbb{Z}$ satisfy $b^{k_i}(a_i/m+\ell/d)\in \mathbb{Z}$, for $i=1,2$. Then, $dm|b^{k_i}(a_id+\ell m)$ for $i=1,2$. Without loss of generality, we may take $k_2\geq k_1$. Then, $dm|b^{k_2}(a_id+\ell m)$ for $i=1,2$, and so $dm|b^{k_2}d(a_2-a_1)$. Since $v|m$ and $(v,b)=1$, we then deduce that $v|(a_2-a_1)$. Now, choosing $a_0:=-(\frac{bm}{d})\ell h$, we may compute \begin{align*}
        \frac{a_0}{m}+\frac{\ell}{d}=\frac{(1-bh)\ell}{d}.
    \end{align*} Writing $d=u_dv_d$ with $u_d|b$ and $v_d|v$, we then have that \begin{align*}
        b\Big(\frac{a_0}{m}+\frac{\ell}{d}\Big)=\frac{1-bh}{v}\cdot \frac{v}{v_d}\cdot \frac{b}{u_d}\cdot \ell\in \mathbb{Z}.
    \end{align*} This provides (i).

    Now, suppose that $a\equiv a_0\ (\text{mod }v)$, and write $a=a_0+cv$ for some $c\in \mathbb{Z}$. Choose $L\in \mathbb{N}$ sufficiently large so that $u|b^L$. Then, \begin{align*}
        b^L\Big(\frac{a_0}{m}+\frac{\ell}{d}+\frac{cv}{m}\Big)=b^L\Big(\frac{a_0}{m}+\frac{\ell}{d}+\frac{c}{u}\Big)\in \mathbb{Z}.
    \end{align*}
\end{proof}

\begin{proof}[Proof of Theorem \ref{thm:Dirichlet(2)}] By orthogonality, we may write \begin{align*}
    \mathbf{1}(n\equiv t\ (\text{mod }m))=\frac{1}{m}\sum_{c=1}^me\Big(\frac{(n-t)c}{m}\Big),
\end{align*} and so \begin{align*}
    \sum_{\substack{n<b^N\\ n\equiv t\ (\text{mod }m)}}\mathbf{1}_\mathcal{C}(n)\Lambda(n)=\frac{1}{m}\sum_{c=1}^me(-ct/m)\sum_{n<b^N}\mathbf{1}_\mathcal{C}(n)\Lambda(n)e(c n/m).
\end{align*} Take $K>0$. We may then apply Proposition \ref{prop:ExponentialSumReduction} to deduce that there is some $A'>0$ such that this is precisely \begin{align*}
    \frac{1}{m}\sum_{c=1}^me(-ct/m)\sum_{d<\log^{A'}(b^N)}\frac{\mu(d)}{\phi(d)}\sum_{\ell\in (\mathbb{Z}/d\mathbb{Z})^*}\widehat{C}_{b^N}\Big(\frac{c}{m}+\frac{\ell}{d}\Big)+O_{K}\Big(\frac{(b-s)^N}{\log^K(b^N)}\Big).
\end{align*} Moving the sum over $c$ to the innermost position, the main term is \begin{align*}
    \sum_{d<\log^{A'}(b^N)}\frac{\mu(d)}{\phi(d)}\sum_{\ell\in (\mathbb{Z}/d\mathbb{Z})^*} \frac{1}{m}\sum_{c=1}^me(-ct/m)\widehat{C}_{b^N}\Big(\frac{c}{m}+\frac{\ell}{d}\Big).
\end{align*}

Now, for a given choice of $(d,\ell,c)$, if $b^N\Big(\frac{c}{m}+\frac{\ell}{d}\Big)\not\in \mathbb{Z}$, then $\|b^i(\frac{c}{m}+\frac{\ell}{d})\|\geq \frac{1}{md}$ for all $0\leq i<N$, and so by similar logic as Lemma \ref{lem:l-inftyBound} we may bound \begin{align*}
    \Big|\widehat{C}_{b^N}\Big(\frac{c}{m}+\frac{\ell}{d}\Big)\Big|\leq (b-s)^N\exp(-c_0N/\log N)
\end{align*} for a constant $c_0=c_0(A',b)$. Clearly, the contribution from such $(d,\ell,c)$ is negligible, and so we may restrict to $(d,\ell,c)$ that satisfy $b^N\Big(\frac{c}{m}+\frac{\ell}{d}\Big)\in \mathbb{Z}$. From the second part of the auxiliary lemma above, we observe that this implies $b^L\Big(\frac{c}{m}+\frac{\ell}{d}\Big)\in \mathbb{Z}$, where $L$ is as in the lemma, and depends only on $m$. Thus, for the non-negligible $(d,\ell,c)$, we have that \begin{align*}
    \widehat{C}_{b^N}\Big(\frac{c}{m}+\frac{\ell}{d}\Big)=\widehat{C}_{b^{N-L}}\Big(b^L\Big(\frac{c}{m}+\frac{\ell}{d}\Big)\Big)\widehat{C}_{b^L}\Big(\frac{c}{m}+\frac{\ell}{d}\Big)=(b-s)^{N-L}\widehat{C}_{b^L}\Big(\frac{c}{m}+\frac{\ell}{d}\Big).
\end{align*} This provides then that our main term is \begin{align*}
    (b-s)^{N-L}\sum_{d<\log^{A'}(b^N)}\frac{\mu(d)}{\phi(d)}\sum_{\ell \in (\mathbb{Z}/d\mathbb{Z})^*}\frac{1}{m}\sum_{1\leq c\leq m}'e(-ct/m)\widehat{C}_{b^L}\Big(\frac{c}{m}+\frac{\ell}{d}\Big),
\end{align*} where $\sum'$ denotes that we only sum over $c$ such that $b^N(\frac{c}{m}+\frac{\ell}{d})\in \mathbb{Z}$. Notice also that this implies that $d|b^Nm$, and since we may restrict to squarefree $d$, $d|bv$. Now, from the first part of the auxiliary lemma above, we must have such $c$ satisfying $c\equiv - (\frac{bm}{d})\ell h\ (\text{mod }v)$, and so we may write this as \begin{align*}
    (b-s)^{N-L}\sum_{d|bv}\frac{\mu(d)}{\phi(d)}\sum_{\ell \in (\mathbb{Z}/d\mathbb{Z})^*}\frac{1}{m}\sum_{c'=1}^u e\Big(-\frac{t}{m}(-\frac{bm\ell h}{d}+c'v)\Big)\widehat{C}_{b^L}\Big(\frac{(1-bh)\ell}{d}+\frac{c'}{u}\Big).
\end{align*} Expanding out the Fourier transform and rearranging terms, this is \begin{align*}
    (b-s)^{N-L}\sum_{d|bv}\frac{\mu(d)}{\phi(d)}\sum_{\ell \in (\mathbb{Z}/d\mathbb{Z})^*}\frac{e(bh\ell t/d)}{m}\sum_{n<b^L}\mathbf{1}_\mathcal{C}(n)e\Big(\frac{(1-bh)\ell n}{d}\Big)\sum_{c'=1}^ue\Big(\frac{(n-t)c'}{u}\Big).
\end{align*} By orthogonality, the innermost sum evaluates to $u\mathbf{1}(u|(n-t))$, and so this may be written as \begin{align*}
    (b-s)^{N-L}\frac{1}{v}\sum_{d|bv}\frac{\mu(d)}{\phi(d)}\sum_{\ell\in (\mathbb{Z}/d\mathbb{Z})^*}e(bh\ell t/d)\sum_{\substack{n<b^L\\ n\equiv t\ (\text{mod }u)}}\mathbf{1}_\mathcal{C}(n)e\Big(\frac{(1-bh)\ell n}{d}\Big).
\end{align*} Moving the sum over $\ell$ to the innermost position, we have Ramanujan's sum \begin{align*}
    \sum_{\ell \in (\mathbb{Z}/d\mathbb{Z})^*}e\Big(\frac{\ell}{d}(bht+(1-bh)n)\Big)=c_d(bht+(1-bh)n),
\end{align*} and so then moving the sum over $d$ to the innermost position, we have a main term of the form \begin{align*}
    \frac{(b-s)^{N-L}}{v}\sum_{\substack{n<b^L\\ n\equiv t\ (\text{mod }u)}}\mathbf{1}_\mathcal{C}(n)\sum_{d|bv}\frac{\mu(d)}{\phi(d)}c_d(bht+(1-bh)n).
\end{align*} Applying the so-called Brauer-Rademacher identity $\sum_{d|j}\frac{\mu(d)}{\phi(d)}c_d(k)=\frac{j}{\phi(j)}\mathbf{1}((j,k)=1)$, we obtain that this is precisely \begin{align}\label{eq:preliminaryConstant}
    \frac{b(b-s)^{N-L}}{\phi(bv)}\sum_{\substack{n<b^L\\ n\equiv t\ (\text{mod }u)}}\mathbf{1}_\mathcal{C}(n)\mathbf{1}((bht+(1-bh)n,bv)=1).
\end{align}

We now simplify the constant in (\ref{eq:preliminaryConstant}). Since $(b,v)=1$, we may factor \begin{align*}
    \mathbf{1}((k,bv)=1)&=\mathbf{1}((k,b)=1)\mathbf{1}((k,v)=1),\quad k:=bht+(1-bh)n.
\end{align*} Since $k\equiv n\ (\text{mod }b)$, the first factor is equivalent to $\mathbf{1}((n,b)=1)$. Whereas since $bh\equiv 1\ (\text{mod }v)$, we may also deduce that $k\equiv t\ (\text{mod }v)$, and so the second factor is equivalent to $\mathbf{1}((t,v)=1)$. Consequently, one has that \begin{align*}
    &\sum_{\substack{n<b^L\\ n\equiv t\ (\text{mod }u)}}\mathbf{1}_\mathcal{C}(n)\mathbf{1}((bht+(1-bh)n,bv)=1)\\&\quad\quad =\mathbf{1}((t,v)=1)\sum_{\substack{n<b^L\\ n\equiv t\ (\text{mod }u)}}\mathbf{1}_\mathcal{C}(n)\mathbf{1}((n,b)=1),
\end{align*} and so we are left with a main term of the form \begin{align*}
    \kappa_{m,t}(b-s)^N,
\end{align*} where \begin{align*}
    \kappa_{m,t}:=\frac{b}{\phi(bv)}\mathbf{1}((t,v)=1)\cdot (b-s)^{-L}\sum_{\substack{n<b^L\\ n\equiv t\ (\text{mod }u)}}\mathbf{1}_\mathcal{C}(n)\mathbf{1}((n,b)=1).
\end{align*} 
    
\end{proof}

\begin{lemma}[Well-definedness of the constant]\label{lem:wellDefined} Provided $u|b^L$, the quantity \begin{align*}
    (b-s)^{-L}\sum_{\substack{n<b^L\\ n\equiv t\ (\text{mod }u)}}\mathbf{1}_\mathcal{C}(n)\mathbf{1}((n,b)=1)
\end{align*} does not depend on $L$.
\end{lemma}

\begin{proof}
    Let $L_0$ be the minimal choice of $L\in \mathbb{N}$ such that $u|b^L$. Take any $L'\geq L_0$. Then, by partitioning $\mathbb{Z}_{\geq 0}$ into residue classes modulo $b^{L_0}$, one has that \begin{align*}
        &\sum_{\substack{n<b^{L'}\\ n\equiv t\ (\text{mod }u)}}\mathbf{1}_\mathcal{C}(n)\mathbf{1}((n,b)=1) \\ &=\sum_{c=0}^{b^{L_0}-1}\sum_{m<b^{L'-L_0}}\mathbf{1}_\mathcal{C}(b^L_0m+c)\mathbf{1}\Big((b^{L_0}m+c,b)=1\Big)\mathbf{1}\Big(b^{L_0}m+c\equiv t\ (\text{mod } u)\Big) \\ &=\sum_{c=0}^{b^{L_0}-1}\sum_{m<b^{L'-L_0}}\mathbf{1}_\mathcal{C}(m)\mathbf{1}_\mathcal{C}(c)\mathbf{1}((c,b)=1)\mathbf{1}\Big(c\equiv t\ (\text{mod }u)\Big) \\ &= (b-s)^{L'-L_0}\sum_{\substack{c<b^{L_0}\\ c\equiv t\ (\text{mod }u)}}\mathbf{1}_\mathcal{C}(c)\mathbf{1}((c,b)=1).
    \end{align*} By dividing both sides of the equation by $(b-s)^{L'}$, we observe the result.
\end{proof}

\subsection{Proof of Theorem \ref{thm:Vinogradov}}

In this section, we prove an analogue of Vinogradov's theorem for exponential sums over primes. This is Theorem \ref{thm:Vinogradov}, restated.

\begin{theorem}\label{thm:VinogradovII}
    Suppose $\mathcal{C}$ satisfies the standing conditions in \S\ref{subsec:standingAssumptions}. Then, for any $\theta\in \mathbb{R}\setminus \mathbb{Q}$, \begin{align*}
        \sum_{n<b^N}\mathbf{1}_\mathcal{C}(n)\Lambda(n)e(n\theta)=o((b-s)^N).
    \end{align*}
\end{theorem}

We will need another auxiliary lemma for the proof of this theorem.

\begin{lemma}[At most one exceptional denominator]\label{lem:uncertainty} Fix $\theta\in \mathbb{T}$ and $u|b$. Take $A'>0$ and $N$ sufficiently large in terms of $A'$. Then, among denominators $v<\log^{A'}(b^N)$ with $(v,b)=1$, there is at most one $v$ such that \begin{align*}
    \max_{\ell \in (\mathbb{Z}/uv\mathbb{Z})^\times}\Big|\widehat{C}_{b^N}\Big(\theta+\frac{\ell}{uv}\Big)\Big|>(b-s)^N\exp\Big(-\frac{1}{b}N^{2/3}\Big).
\end{align*} Moreover, there are at most $u$ choices of $\ell \in (\mathbb{Z}/uv\mathbb{Z})^\times$ where this inequality is attained.
\end{lemma}

\begin{proof}
    Suppose that we had some $v_1, v_2$ with $(v_i,b)=1$, and $\ell_1\in (\mathbb{Z}/uv_1\mathbb{Z})^\times$, $\ell_2\in (\mathbb{Z}/uv_2\mathbb{Z})^\times$ such that \begin{align*}
        \Big|\widehat{C}_{b^N}\Big(\theta+\frac{\ell_1}{uv_1}\Big)\Big|&>(b-s)^N\exp(-\frac{1}{b}N^{2/3}),\\ \Big|\widehat{C}_{b^N}\Big(\theta+\frac{\ell_2}{uv_2}\Big)\Big|&>(b-s)^N\exp(-\frac{1}{b}N^{2/3}).
    \end{align*} Notice, by the triangle inequality and Cauchy-Schwarz, \begin{align*}
        &\sum_{i=0}^{N-1}\Big\|b^i\Big(\frac{\ell_1}{uv_1}-\frac{\ell_2}{uv_2}\Big)\Big\|\\ &\leq N^{1/2}\Big(\sum_{i=0}^{N-1}\Big\|b^i\Big(\theta+\frac{\ell_1}{uv_1}\Big)\Big\|^2\Big)^{1/2}+N^{1/2}\Big(\sum_{i=0}^{N-1}\Big\|b^i\Big(\theta+\frac{\ell_2}{uv_2}\Big)\Big\|^2\Big)^{1/2}.
    \end{align*} By the assumption and the inequality \begin{align*}
        |\widehat{C}_{b^N}(t)|\leq (b-s)^N\exp\Big(-\frac{1}{b}\sum_{i=0}^{N-1}\|b^it\|^2\Big)
    \end{align*} (from the derivation of the $L^\infty$ bound) we obtain that \begin{align*}
        \sum_{i=0}^{N-1}\Big\|b^i\Big(\theta+\frac{\ell_1}{uv_1}\Big)\Big\|^2\leq N^{2/3},\quad \sum_{i=0}^{N-1}\Big\|b^i\Big(\theta+\frac{\ell_2}{uv_2}\Big)\Big\|^2\leq N^{2/3}
    \end{align*} and so we then have that \begin{align*}
        \sum_{i=0}^{N-1}\Big\|b^i\Big(\frac{\ell_1}{uv_1}-\frac{\ell_2}{uv_2}\Big)\Big\|\ll N^{5/6}.
    \end{align*} Write $\frac{\ell_1}{uv_1}-\frac{\ell_2}{uv_2}=\frac{k}{j}$ with $(k,j)=1$. We have two cases from here: either $p|j\implies p|b$, or there exists some prime $p|j$ such that $p\nmid b$. Consider first the second case. Then, $\|b^ik/j\|\geq \frac{1}{j}\geq \frac{1}{(\log^{A'}(b^N))^2}$ for all $0\leq i<N$, and so for any interval of size $\frac{2\log (\log^{A'}(b^N))}{\log b}$ in $\{0,...,N-1\}$ we may find some index $i$ such that $\|b^ik/j\|\geq \frac{1}{2b}$. This gives that the sum above is $\gg_b \frac{N}{\log (\log^{A'}(b^N))}$, which is a contradiction for sufficiently large $N$ in terms of $A'$.

    We are left with the case where $p|j\implies p|b$, which implies that there exists some $i\geq 0$ such that $u^2v_1v_2|b^i(\ell_1 uv_2-\ell_2uv_1)$. This gives that $v_1|b^i\ell_1uv_2$, and since $v_1$ is coprime to each of $b$, $\ell_1$, and $u$, we have that $v_1|v_2$. Similarly, $v_2|v_1$, and so $v_1=v_2$. In such a case, we have that $u^2v^2|b^iuv(\ell_1-\ell_2)$, and so $v|(\ell_1-\ell_2)$, so that $\ell_1\equiv \ell_2\ (\text{mod }v)$. This is possible for only $u$ number of residues $\ell$ modulo $uv$.

\end{proof}

Finally, we need decay at irrationals.

\begin{lemma}\label{lem:irrationalDecay}
    Suppose $\theta\in \mathbb{R}\setminus \mathbb{Q}$. Then, \begin{align*}
        \widehat{C}{b^N}(\theta)=o((b-s)^N).
    \end{align*}
\end{lemma}

\begin{proof}
    From the $L^\infty$ bound, one has that \begin{align*}
        |\widehat{C}_{b^N}(\theta)|\leq (b-s)^N\exp\Big(-\frac{1}{b}\sum_{i=0}^{N-1}\|b^i\theta\|^2\Big),
    \end{align*} and so it suffices to show that $\sum_{i=0}^{N-1}\|b^i\theta\|^2\rightarrow \infty$ as $N\rightarrow \infty$. Suppose by way of contradiction that $\sum_{i\geq 0}\|b^i\theta\|^2<\infty$, then necessarily one has that $\|b^i\theta\|\rightarrow 0$ as $i\rightarrow \infty$. In particular, there exists some $N$ such that for all $i\geq N$, $\|b^i\theta\|\leq b^{-2}$, say. Since $\theta$ is irrational, $\|b^N\theta\|\neq 0$, and so $0<\|b^N\theta\|\leq b^{-2}$. Then, $\|b^{N+1}\theta\|=b\|b^N\theta\|$, and by extension, for all $k\leq \lfloor \frac{\log (1/2\|b^N\theta\|)}{\log b}\rfloor $, one observes that \begin{align*}
        \|b^{N+k}\theta\|=b^k\|b^N\theta\|.
    \end{align*} At $k=\lfloor \frac{\log (1/2\|b^N\theta\|)}{\log b}\rfloor$, one may compute \begin{align*}
        \|b^{N+k}\theta\|=b^k\|b^N\theta\|\in \Big(\frac{1}{2b},\frac{1}{2}\Big],
    \end{align*} which is a contradiction.
\end{proof}

\begin{proof}[Proof of Theorem \ref{thm:VinogradovII}] Fix $\epsilon>0$ and take $\theta$ irrational. We will show that \begin{align*}
    \limsup_{N\rightarrow \infty}(b-s)^{-N}\Big|\sum_{n<b^N}\mathbf{1}_\mathcal{C}(n)\Lambda(n)e(\theta n)\Big|\ll_b \sup_{v>1/\epsilon}\frac{1}{\phi(v)}
\end{align*} Then, taking $\epsilon\rightarrow 0$, one has the result from the fact that the supremum tends to zero as $\epsilon\rightarrow 0$.\\

First, applying Proposition \ref{prop:ExponentialSumReduction} to the sum, for $A>0$ we may write \begin{align*}
    \sum_{n<b^N}\mathbf{1}_\mathcal{C}(n)\Lambda(n)e(\theta n)=\sum_{d<\log^{A'}(b^N)}\frac{\mu(d)}{\phi(d)}\sum_{\ell\in (\mathbb{Z}/d\mathbb{Z})^*}\widehat{C}_{b^N}\Big(\theta+\frac{\ell}{d}\Big)+O_A\Big(\frac{(b-s)^N}{\log^A(b^N)}\Big)
\end{align*} for some $A'>0$ depending on $A$. For each $d<\log^{A'}(b^N)$, we may write $d$ uniquely as $d=uv$ with $p|u\implies p|b$ and $(v,b)=1$. Since we may restrict $d$ to be squarefree, so may we restrict $u$, and so $u|b$. This gives that \begin{align*}
    \sum_{n<b^N}\mathbf{1}_\mathcal{C}(n)\Lambda(n)e(\theta n)=\sum_{u|b}\sum_{v<\log^{A'}(b^N)/u}\frac{\mu(uv)}{\phi(uv)}\sum_{\ell \in (\mathbb{Z}/uv\mathbb{Z})^*}\widehat{C}_{b^N}\Big(\theta+\frac{\ell}{uv}\Big)+O_A\Big(\frac{(b-s)^N}{\log^A(b^N)}\Big).
\end{align*} For each fixed $u|b$, Lemma \ref{lem:uncertainty} shows that there is at most one denominator $v<\log^{A'}(b^N)/u$ for which an exceptional residue exists. If such a denominator exists, denote it by $v_{u,N}$ and define $\mathcal{E}_{u,N}$ as follows: \begin{align*}
    \mathcal{E}_{u,N}:=\Big\{\ell\in (\mathbb{Z}/uv_{u,N}\mathbb{Z})^*:\Big|\widehat{C}_{b^N}\Big(\theta+\frac{\ell}{uv_{u,N}}\Big)\Big|>(b-s)^N\exp(-c_bN^{2/3})\Big\}
\end{align*} Otherwise, set $v_{u,N}=1$ and $\mathcal{E}_{u,N}=\varnothing$. Then, using the bound from Lemma \ref{lem:uncertainty} for the non-exceptional denominators, one obtains that \begin{align*}
    &\sum_{u|b}\sum_{v<\log^{A'}(b^N)/u}\frac{\mu(uv)}{\phi(uv)}\sum_{\ell\in (\mathbb{Z}/uv\mathbb{Z})^*}\widehat{C}_{b^N}\Big(\theta+\frac{\ell}{uv}\Big) \\ &=\sum_{u|b}\frac{\mu(uv_{u,N})}{\phi(uv_{u,N})}\sum_{\ell \in \mathcal{E}_{u,N}}\widehat{C}_{b^N}\Big(\theta+\frac{\ell}{uv_{u,N}}\Big)+O_b\Big((b-s)^N\log^{2A'}(b^N)\exp(-c_bN^{2/3})\Big).
\end{align*} We separate the main term into two parts: where $v_{u,N}\leq 1/\epsilon$ and where $v_{u,N}>1/\epsilon$: \begin{align*}
    \Sigma_1&:=\sum_{\substack{u|b\\ v_{u,N}\leq 1/\epsilon}}\frac{\mu(uv_{u,N})}{\phi(uv_{u,N})}\sum_{\ell \in \mathcal{E}_{u,N}}\widehat{C}_{b^N}\Big(\theta+\frac{\ell}{uv_{u,N}}\Big) \\ \Sigma_2&:=\sum_{\substack{u|b\\ v_{u,N}>1/\epsilon}}\frac{\mu(uv_{u,N})}{\phi(uv_{u,N})}\sum_{\ell\in \mathcal{E}_{u,N}}\widehat{C}_{b^N}\Big(\theta+\frac{\ell}{uv_{u,N}}\Big).
\end{align*} By the triangle inequality, \begin{align*}
    |\Sigma_1|\leq \sum_{\substack{u|b\\ v_{u,N}\leq 1/\epsilon}}\frac{1}{\phi(u)}\sum_{\ell \in \mathcal{E}_{u,N}}\Big|\widehat{C}_{b^N}\Big(\theta+\frac{\ell}{uv_{u,N}}\Big)\Big|
\end{align*} and since \begin{align*}\{\ell \in \mathcal{E}_{u,N}:v_{u,N}\leq 1/\epsilon\}\subseteq \{\ell: \ell\leq u\epsilon^{-1} \}\end{align*} one may majorize the sum above and produce that \begin{align*}
    |\Sigma_1|\leq \sum_{\substack{u|b\\ v_{u,N}\leq 1/\epsilon}}\frac{1}{\phi(u)}\sum_{\ell \leq u\epsilon^{-1}}\Big|\widehat{C}_{b^N}\Big(\theta+\frac{\ell}{uv_{u,N}}\Big)\Big|.
\end{align*} This is a linear combination of terms of the form $|\widehat{C}_{b^N}(\theta_i)|$ where each $\theta_i$ is irrational, and a finite number of terms that depend only on $\epsilon$ and $b$. By Lemma \ref{lem:irrationalDecay} we then deduce that this is $o((b-s)^N)$.\\

To bound $\Sigma_2$, one may use the trivial bound for $\widehat{C}$, and obtain \begin{align*}
    \Big|\sum_{\substack{u|b\\ v_{u,N}>1/\epsilon}}\frac{\mu(uv_{u,N})}{\phi(uv_{u,N})}\sum_{\ell\in \mathcal{E}_{u,N}}\widehat{C}_{b^N}\Big(\theta+\frac{\ell}{uv_{u,N}}\Big)\Big|&\leq (b-s)^N\sum_{\substack{u|b\\ v_{u,N}>1/\epsilon}}\frac{1}{\phi(uv_{u,N})}|\mathcal{E}_{u,N}| \\ &\leq (b-s)^N\sum_{\substack{u|b\\ v_{u,N}>1/\epsilon}}\frac{u}{\phi(uv_{u,N})} \\ &\ll_b (b-s)^N \sup_{v>1/\epsilon}\frac{1}{\phi(v)}.
\end{align*} Here, we used that $|\mathcal{E}_{u,N}|\leq u$. Taking $N\rightarrow \infty$, we obtain that \begin{align*}
    \limsup_{N\rightarrow \infty}(b-s)^{-N}\Big|\sum_{n<b^N}\mathbf{1}_\mathcal{C}(n)\Lambda(n)e(\theta n)\Big|\ll_b \sup_{v>1/\epsilon}\frac{1}{\phi(v)},
\end{align*} as desired.
    
\end{proof}

\section{Funding}

This work was supported by the Department of Education Graduate Assistance in Areas of National Need program at the Georgia Institute of Technology [P200A240169]; and the US National Science Foundation [DMS-2247254].

\section{Acknowledgements} The author thanks Michael Lacey for helpful feedback.

\printbibliography

@article{bourgainVDC,
  author  = {Bourgain, Jean},
  title   = {{Ruzsa}'s problem on sets of recurrence},
  journal = {Israel Journal of Mathematics},
  volume  = {59},
  number  = {2},
  pages   = {150--166},
  year    = {1987},
  doi     = {10.1007/BF02787258}
}

@article{bourgainPrimes1,
  author  = {Bourgain, Jean},
  title   = {Prescribing the binary digits of primes},
  journal = {Israel Journal of Mathematics},
  volume  = {194},
  number  = {2},
  pages   = {935--955},
  year    = {2013},
  doi     = {10.1007/s11856-012-0104-2}
}

@article{bourgainPrimes2,
  author  = {Bourgain, Jean},
  title   = {Prescribing the binary digits of primes, {II}},
  journal = {Israel Journal of Mathematics},
  volume  = {206},
  number  = {1},
  pages   = {165--182},
  year    = {2015},
  doi     = {10.1007/s11856-014-1129-5}
}

@misc{burgin2026integercantorsetsarithmetic,
  author        = {Burgin, Alex and Fragkos, Anastasios and
                   Lacey, Michael T. and Mena, Dario and
                   Reguera, Maria Carmen},
  title         = {Integer {Cantor} Sets: Arithmetic Combinatorial Properties},
  year          = {2026},
  eprint        = {2602.15292},
  archivePrefix = {arXiv},
  primaryClass  = {math.DS},
  url           = {https://arxiv.org/abs/2602.15292}
}

@misc{burgin2026szemeredistheoremcantorsets,
  author        = {Burgin, Alex and Fragkos, Anastasios and
                   Lacey, Michael T. and Mena, Dario and
                   Reguera, Maria Carmen},
  title         = {{Szemer{\'e}di}'s Theorem Along {Cantor} Sets of Integers},
  year          = {2026},
  eprint        = {2602.15299},
  archivePrefix = {arXiv},
  primaryClass  = {math.NT},
  url           = {https://arxiv.org/abs/2602.15299}
}

@article{semiprimes,
  author   = {Dartyge, C{\'e}cile and Mauduit, Christian},
  title    = {Ensembles de densit{\'e} nulle contenant des entiers
              poss{\'e}dant au plus deux facteurs premiers},
  journal  = {Journal of Number Theory},
  volume   = {91},
  number   = {2},
  pages    = {230--255},
  year     = {2001},
  doi      = {10.1006/jnth.2001.2681},
  language = {French}
}

@article{EMS1,
  author  = {Erd{\H{o}}s, Paul and Mauduit, Christian and
             S{\'a}rk{\"o}zy, Andr{\'a}s},
  title   = {On arithmetic properties of integers with missing digits.
             {I}. Distribution in residue classes},
  journal = {Journal of Number Theory},
  volume  = {70},
  number  = {2},
  pages   = {99--120},
  year    = {1998},
  doi     = {10.1006/jnth.1998.2229}
}

@article{EMS2,
  author  = {Erd{\H{o}}s, Paul and Mauduit, Christian and
             S{\'a}rk{\"o}zy, Andr{\'a}s},
  title   = {On arithmetic properties of integers with missing digits.
             {II}. Prime factors},
  journal = {Discrete Mathematics},
  volume  = {200},
  number  = {1--3},
  pages   = {149--164},
  year    = {1999},
  doi     = {10.1016/S0012-365X(98)00331-8},
  note    = {Paul Erd{\H{o}}s memorial collection}
}

@article{furstenbergKatznelson,
  author  = {Furstenberg, Hillel and Katznelson, Yitzhak},
  title   = {An ergodic {Szemer{\'e}di} theorem for {IP}-systems
             and combinatorial theory},
  journal = {Journal d'Analyse Math{\'e}matique},
  volume  = {45},
  number  = {1},
  pages   = {117--168},
  year    = {1985},
  doi     = {10.1007/BF02792547}
}

@article{greenPowerSavings,
  author  = {Green, Ben},
  title   = {On {S{\'a}rk{\"o}zy}'s theorem for shifted primes},
  journal = {Journal of the American Mathematical Society},
  volume  = {37},
  number  = {4},
  pages   = {1121--1201},
  year    = {2024},
  doi     = {10.1090/jams/1036}
}

@article{greenTao,
  author  = {Green, Ben and Tao, Terence},
  title   = {Linear equations in the primes},
  journal = {Annals of Mathematics},
  series  = {2},
  volume  = {171},
  number  = {3},
  pages   = {1753--1850},
  year    = {2010},
  doi     = {10.4007/annals.2010.171.1753}
}

@article{greenTaoMobius,
  author  = {Green, Ben and Tao, Terence},
  title   = {The {M{\"o}bius} function is strongly orthogonal to nilsequences},
  journal = {Annals of Mathematics},
  series  = {2},
  volume  = {175},
  number  = {2},
  pages   = {541--566},
  year    = {2012},
  doi     = {10.4007/annals.2012.175.2.3}
}

@article{greenTaoZiegler,
  author  = {Green, Ben and Tao, Terence and Ziegler, Tamar},
  title   = {An inverse theorem for the {Gowers}
             {$U^{s+1}[N]$}-norm},
  journal = {Annals of Mathematics},
  series  = {2},
  volume  = {176},
  number  = {2},
  pages   = {1231--1372},
  year    = {2012},
  doi     = {10.4007/annals.2012.176.2.11}
}

@book{iwaniecKowalski,
  author    = {Iwaniec, Henryk and Kowalski, Emmanuel},
  title     = {Analytic Number Theory},
  series    = {American Mathematical Society Colloquium Publications},
  volume    = {53},
  publisher = {American Mathematical Society},
  address   = {Providence, RI},
  year      = {2004},
  doi       = {10.1090/coll/053},
  isbn      = {978-0-8218-3633-0}
}

@article{kamaeMendesFrance,
  author  = {Kamae, T. and Mend{\`e}s France, Michel},
  title   = {{Van der Corput}'s difference theorem},
  journal = {Israel Journal of Mathematics},
  volume  = {31},
  number  = {3--4},
  pages   = {335--342},
  year    = {1978},
  doi     = {10.1007/BF02761498}
}

@article{konyagin,
  author  = {Konyagin, Sergei V.},
  title   = {Arithmetic properties of integers with missing digits:
             distribution in residue classes},
  journal = {Periodica Mathematica Hungarica},
  volume  = {42},
  number  = {1--2},
  pages   = {145--162},
  year    = {2001},
  doi     = {10.1023/A:1015256809636}
}

@article{lengSawhney,
  author  = {Leng, James and Sawhney, Mehtaab},
  title   = {{Vinogradov}'s Theorem for Primes With Restricted Digits},
  journal = {International Mathematics Research Notices},
  volume  = {2025},
  number  = {3},
  pages   = {rnae294},
  year    = {2025},
  doi     = {10.1093/imrn/rnae294}
}

@article{lucier,
  author  = {Lucier, Jason},
  title   = {Difference sets and shifted primes},
  journal = {Acta Mathematica Hungarica},
  volume  = {120},
  number  = {1--2},
  pages   = {79--102},
  year    = {2008},
  doi     = {10.1007/s10474-007-7107-1}
}

@article{maynardDecimal,
  author  = {Maynard, James},
  title   = {Primes with restricted digits},
  journal = {Inventiones Mathematicae},
  volume  = {217},
  number  = {1},
  pages   = {127--218},
  year    = {2019},
  doi     = {10.1007/s00222-019-00865-6}
}

@article{maynardAsymptotic,
  author  = {Maynard, James},
  title   = {Primes and polynomials with restricted digits},
  journal = {International Mathematics Research Notices},
  volume  = {2022},
  number  = {14},
  pages   = {10626--10648},
  year    = {2022},
  doi     = {10.1093/imrn/rnab002}
}

@article{nathBV,
  author  = {Nath, Kunjakanan},
  title   = {Primes with a missing digit: Distribution in arithmetic
             progressions and an application in sieve theory},
  journal = {Journal of the London Mathematical Society},
  series  = {2},
  volume  = {109},
  number  = {1},
  pages   = {e12837},
  year    = {2024},
  doi     = {10.1112/jlms.12837}
}

@article{ruzsaSanders,
  author  = {Ruzsa, Imre Z. and Sanders, Tom},
  title   = {Difference sets and the primes},
  journal = {Acta Arithmetica},
  volume  = {131},
  number  = {3},
  pages   = {281--301},
  year    = {2008},
  doi     = {10.4064/aa131-3-5}
}

@article{saavedraAraya,
  author  = {Saavedra-Araya, Vicente},
  title   = {Distribution of integers with digit restrictions via
             {Markov} chains},
  journal = {Ergodic Theory and Dynamical Systems},
  volume  = {46},
  number  = {3},
  pages   = {757--804},
  year    = {2026},
  doi     = {10.1017/etds.2025.10256}
}

@article{sarkozySquares,
  author  = {S{\'a}rk{\"o}zy, Andr{\'a}s},
  title   = {On difference sets of sequences of integers. {I}},
  journal = {Acta Mathematica Academiae Scientiarum Hungaricae},
  volume  = {31},
  number  = {1--2},
  pages   = {125--149},
  year    = {1978},
  doi     = {10.1007/BF01896079}
}

@article{sarkozyOriginal,
  author  = {S{\'a}rk{\"o}zy, Andr{\'a}s},
  title   = {On difference sets of sequences of integers. {III}},
  journal = {Acta Mathematica Academiae Scientiarum Hungaricae},
  volume  = {31},
  number  = {3--4},
  pages   = {355--386},
  year    = {1978},
  doi     = {10.1007/BF01901984}
}

@article{primesQuant,
  author  = {Slijep{\v{c}}evi{\'c}, Sini{\v{s}}a},
  title   = {On van der {Corput} property of shifted primes},
  journal = {Functiones et Approximatio Commentarii Mathematici},
  volume  = {48},
  number  = {1},
  pages   = {37--50},
  year    = {2013},
  doi     = {10.7169/facm/2013.48.1.4}
}

@article{swaenepoel,
  author  = {Swaenepoel, Cathy},
  title   = {Prime numbers with a positive proportion of preassigned digits},
  journal = {Proceedings of the London Mathematical Society},
  series  = {3},
  volume  = {121},
  number  = {1},
  pages   = {83--151},
  year    = {2020},
  doi     = {10.1112/plms.12314}
}

@article{wang,
  author  = {Wang, Ruoyi},
  title   = {On a theorem of {S{\'a}rk{\"o}zy} for difference sets
             and shifted primes},
  journal = {Journal of Number Theory},
  volume  = {211},
  pages   = {220--234},
  year    = {2020},
  doi     = {10.1016/j.jnt.2019.10.009}
}

@misc{brokeringIosevichKrause,
  author        = {{Brokering Pinilla}, F{\'e}lix and
                   Iosevich, Alex and
                   Krause, Ben},
  title         = {Pointwise Convergence of Ergodic Averages Along
                   Integer {Cantor} Sets},
  year          = {2026},
  eprint        = {2607.16064},
  archivePrefix = {arXiv},
  primaryClass  = {math.DS},
  url           = {https://arxiv.org/abs/2607.16064}
}
\end{document}